\newtheorem{theorem}{Theorem}[section]
\newtheorem{define}[theorem]{Definition}
\newtheorem{exa}[theorem]{Example}
\newenvironment{example}{\begin{exa} \rm}{\qee\end{exa}}
\newtheorem{propo}[theorem]{Proposition}
\newtheorem{exerc}[theorem]{Exercise}
\newtheorem{conj}[theorem]{Conjecture}
\newtheorem{ques}[theorem]{Open Question}
\newenvironment{question}{\begin{ques} \rm}{\qee\end{ques}}
\newtheorem{lem}[theorem]{Lemma}
\newenvironment{lemma}{\begin{lem} \it}{\end{lem}}
\newtheorem{cor}[theorem]{Corollary}
\newenvironment{corollary}{\begin{cor} \it}{\end{cor}}
\newtheorem{factief}[theorem]{Fact}
\newenvironment{fact}{\begin{factief} \it}{\end{factief}}
\newtheorem{rem}[theorem]{Remark}
\newenvironment{remark}{\begin{rem} \rm}{\qee\end{rem}}
\newtheorem{dis}[theorem]{Discussion}
\renewcommand{\iff}{\leftrightarrow}
\newcommand{\medent}{\medskip\noindent}
 \newcommand{\tupel}[1]{{\langle #1 \rangle}}
\newcommand{\verz}[1]{\{ #1 \}}
\newcommand{\shapipe}{\,
                    \setlength{\unitlength}{1ex}
                    \begin{picture}(2,2)
                    \put(.25,.15){\line(0,1){1.22}}
                    \put(.25,1.37){\line(1,0){0.61}}
                    \put(.45,-.05){\line(1,0){0.61}}
                    \put(.4,0){\line(1,0){0.61}}
                    \put(.35,.05){\line(1,0){0.61}}
                    \put(.3,.1){\line(1,0){0.61}}
                    \put(.25,.15){\line(1,0){0.61}}
                    \put(1.04,-.05){\line(0,1){1.22}}
                    \put(0.99,0){\line(0,1){1.22}}
                    \put(0.94,.05){\line(0,1){1.22}}
                    \put(0.89,.1){\line(0,1){1.22}}
                    \put(0.84,.15){\line(0,1){1.22}}
                    \end{picture}
                    \!}
\newcommand{\qee} {\hspace*{2mm}\hfill $\shapipe$}
\newcommand{\sat}{{\sf sat}}
\newcommand{\apr}{{\vartriangle}}
\newcommand{\opr}{{\Box}}
 \newcommand{\nrhd}{\mathrel{\not\! \rhd}}
\newcommand{\jump}{\mathrel{\mbox{\textcolor{gray}{$\blacktriangleright$}}}}
\newcommand{\jumpb}{\mathrel{\mbox{\textcolor{gray}{$\blacktriangleleft$}}}}
\newcommand{\sline}{\raise-0.3ex\hbox{$\hbox{--}\kern-0.84ex\raise0.45ex\hbox{$\hbox{\scalebox{0.3}{\bf /}}\kern-0.37ex\hbox{\scalebox{0.3}{\bf /}}$}$}}
\newcommand{\slinei}{\raise-0.3ex\hbox{$\hbox{--}\kern-0.84ex\raise0.45ex\hbox{$\hbox{\scalebox{0.3}{\bf \textbackslash}}\kern-0.37ex\hbox{\scalebox{0.3}{\bf \textbackslash}}$}$}}
\newcommand{\jumpneq}{\mathrel{\jump_{\hspace*{-0.235cm}{}_{\kern0.08ex \sline}}\hspace*{0.09cm}}}
\newcommand{\jumpbneq}{\mathrel{\jumpb_{\hspace*{-0.235cm}{}_{\kern0.08ex \slinei}}\hspace*{0.09cm}}}
\newcommand{\dashvneq}{\mathrel{\dashv_{\hspace*{-0.235cm}{}_{\kern0.2ex \slinei}}\hspace*{0.09cm}}}
\newcommand{\vdashneq}{\mathrel{\vdash_{\hspace*{-0.235cm}{}_{\kern0.3ex \sline}}\hspace*{0.09cm}}}
\newcommand{\lhdnneq}{\mathrel{\lhd_{\hspace*{-0.27cm}{}_{\kern0.2ex \slinei}}\hspace*{0.09cm}}}
\newcommand{\rhdnneq}{\mathrel{\rhd_{\hspace*{-0.27cm}{}_{\kern0.3ex \sline}}\hspace*{0.09cm}}}
 \newcommand{\noshow}[1]{}
 \newcommand{\opcl}[1]{(\noshow{)}#1\noshow{[}]}
  \newcommand{\valf}[1]{{\sf F}_{{#1}}}
\newcommand{\tinyset}{{\sf TiS}}
\title[On a Question of Krajewski's]{On a Question of Krajewski's}
\author{Fedor Pakhomov}
 \address{Steklov Mathematical Institute,
                8 Gubkina Street,
                Moscow 119991, Russia}
\email{pakhfn@mi.ras.ru}
\author{Albert Visser}
 \address{Philosophy, Faculty of Humanities,
                Utrecht University,
               Janskerkhof 13,
                3512BL~~Utrecht, The Netherlands}
\email{a.visser@uu.nl}
\date{\today}
\keywords{predicate logic, recursively enumerable theories, interpretability, linear orderings}
\subjclass[2000]{03B10, 03B30, 03F25}
\thanks{We are grateful to Leszek Ko{\l}odziejczyk for an enlightening discussion of the question answered
in Section~\ref{unapred}.}
\begin{document}

\begin{abstract}
In this paper we provide a (negative) solution to a problem posed by Stanis{\l}aw Krajewski. 
Consider a  recursively enumerable theory $U$ and a finite expansion of the signature of $U$
that contains at least one predicate symbol of arity $\geq 2$. 
We show that, for any finite extension $\alpha$ of $U$ in the expanded language
that is conservative over $U$, there is a conservative extension $\beta$ of $U$ in the expanded language, such that $  \alpha\vdash\beta$
and $\beta \nvdash \alpha$. The result is preserved when we consider either \emph{extensions} or \emph{model-conservative extensions}
of $U$ in stead of \emph{conservative extensions}. Moreover, the result is preserved 
 when we replace $\dashv$ as ordering on the finitely axiomatized extensions in the expanded language
by a special kind of interpretability, to wit \emph{interpretability that identically translates the symbols of the
$U$-language}.
 
 We show that the result fails when we consider an expansion with only unary predicate symbols for conservative extensions of $U$
 ordered by interpretability that preserves the symbols of $U$.
\end{abstract}

\maketitle

\emph{Dedicated to Stanis{\l}aw Krajewski.}

\section{Krajewski's  Question}
\noindent At the Workshop on Formal Truth Theories in Warswaw, September 28--30, 2017, Stanis{\l}aw Krajewski asked
the following question.
\begin{quote}
Consider any theory $U$ of finite signature and suppose that
$U$ is not finitely axiomatizable. We  expand the language
of $U$ by finitely many extra predicate symbols. 
Can there be a finitely axiomatized $\alpha$ in the expanded language 
that conservatively extends
$U$ such that there is no finitely axiomatized $\beta$ that conservatively extends $U$
strictly $\dashv$-below $\alpha$, i.e.,
such that $ \alpha \vdash \beta$ and $\beta  \nvdash \alpha$? 
\end{quote}

\noindent
In this note, we prove that the answer is \emph{no} for all $U$ in case the expansion contains at least one symbol of arity 2 or larger. 
In fact, we will prove the desired result as one of several similar results, where, in stead of the relation of \emph{conservative extension}, we can also read
either \emph{extension} or \emph{model-conservative extension} and where $\dashv$ as ordering on finitely axiomatized extensions can be replaced by
some special kinds of interpretability, to wit: either
\emph{parameter-free interpretability that identically preserves the symbols of $U$} or \emph{interpretability with parameters that
identically preserves the symbols of $U$}.

For the case that we expand with only unary symbols, we provide a class of examples that illustrate that the answer may be \emph{yes} 
for the interpretability orderings. See Section~\ref{unapred}. We show, in a sense, that there is just one finite way to say that an ordering
is infinite in the language of linear orderings plus finitely many extra unary predicates. A consequence of our results is that
the finite model property for finitely axiomatized theories of linear order is decidable.

Kleene \cite{klee:fini52} and Craig {\&} Vaught \cite{crai:fini58} show that, in  case $U$ is recursively enumerable and has no finite models,
there is a finitely axiomatized $\alpha$ that conservatively extends
$U$ in the language expanded with at least one relation symbol of arity $\geq 2$. 
Craig {\&} Vaught prove an even stronger result where the finite theory $\alpha$ that extends $U$  is \emph{model-conservative} over $U$.
For completeness, we reprove the result by Craig {\&} Vaught below. See Section~\ref{existence}. 

Craig and Vaught provide an example that shows that, in the case of only unary expansion, there need not be
a finitely axiomatized conservative extension. This also follows by a result of Skolem that implies that
there is no formula in the language with identity and only unary predicate symbols that has
only infinite models ---if it has any models at all.

In Appendix~\ref{environment}, we provide some results in the environment of our problem.

\section{Preliminaries}
We work in languages with only relational symbols. This restriction is not really a limitation since we 
can simulate the presence of terms using the well-known term-unwinding algorithm. We
work in languages with identity as logical symbol.

Suppose $U$ is a theory of finite signature $\Sigma_U$ and let $\Theta$ be a finite signature disjoint from $\Sigma_U$.
We use $A,B, \ldots$ for sentences of signature $\Sigma_U$ and $\alpha,\beta, \ldots$ for sentences of signature $\Sigma_U+\Theta$. 
We will confuse sentences with finitely axiomatized
theories. Thus, we will write, e.g., both $\alpha +A$ and $(\alpha \wedge A)$.

We write ${\sf M}_\Theta$ for the maximal arity of a symbol in $\Theta$ and $|\Theta|$ for the cardinality of $\Theta$.

\subsection{Theories}
We will employ a number of specific theories in our paper.

\medent
The theory {\sf INF} is the theory in the language of identity that has, for every $n$, an axiom that says `there are at least $n$ elements', or,
\[\exists x_0\ldots \exists x_{n-1} \bigwedge_{i<j<n} x_i \neq x_j.\] 

\medent
The theory {\sf LIN} is the theory of linear order.

\medent
The theory ${\sf S}^1_2$ is the weak arithmetic of p-time computability. See, e.g.,
\cite{buss:boun86} or \cite{haje:meta91} for a description.

\medent
The theory {\sf AS} or Adjunctive Set Theory has the following axioms.
\begin{enumerate}[{\sf AS}1.]
\item
$\exists x\, \forall y\, y \not\in x$
\item
$\forall x\, \forall y\, \exists u\, \forall v\, (v\in u \iff (v\in x\vee v=y))$.
\end{enumerate}
We refer the reader to \cite{viss:what13} for further information about {\sf AS}.

We can interpret ${\sf S}^1_2$ in {\sf AS}. We fix one such interpretation $N$.
We can arrange it so that assignments for formulas coded in $N$ have desirable 
properties that make a definition of satisfaction meaningful. We can also arrange that, in the
obvious interpretation of {\sf AS} in the hereditarily finite sets, $\mathbb{HF}$, the interpretation of $N$ is the standard numbers.

\medent
Let a signature $\Sigma$ be given. We define the finitely axiomatized theory ${\sf C}_\Sigma$ in the $\Sigma$-language expanded with one
fresh binary relation symbol $R$ as follows.
Let $x \in y :\iff xR y \wedge \neg\, yRy$ and $\sat(x,y) := \exists z\, (zRz \wedge  \tupel{x,y} R z)$. Here
$\tupel{\cdot,\cdot}$ is the Kuratowski  $\in$-pairing.
We take the following axioms for ${\sf C}_\Sigma$.
\begin{itemize}
\item
{\sf AS} plus extensionality for $\in$.
\item
$\sat$ is a satisfaction predicate for the $\Sigma$-language
 with commutation conditions for $\Sigma$-formulas coded in $N$. 
\end{itemize}

\noindent 
It is easy to see that every countably infinite model of signature $\Sigma$ can be expanded
to a model of ${\sf C}_\Sigma$. We will write ${\sf C}_U$ for ${\sf C}_{\Sigma_U}$.

\begin{remark}
An alternative way to construct a functional equivalent of ${\sf C}_\Sigma$ is to expand $\Sigma$ with two primitive
predicates $\in$ and {\sat} and 
employ a theorem of Tarski to reduce the two predicates to one in a definitionally equivalent way.
See \cite{tars:gene54}. This strategy is employed by Craig {\&} Vaught in \cite{crai:fini58}. 
\end{remark}

\noindent
The theory {\tinyset}, or, \emph{tiny set theory},  is defined as follows.
\begin{enumerate}[{\tinyset}1.]
\item Extensionality:\\
$\forall x\,\forall y \, (x=y \iff \forall z\, (z\in x \iff z\in y))$. 
\item
Restricted Adjunction of Elements:\\
$\forall x\, \forall y\,\forall z\, (y\in z \to \exists u\, \forall v\, (v \in u \iff (v\in x \vee v=y)))$.
\item
Foundation for Sets:\\
$\forall x\, \forall y\, (y\in x \to \exists z\, (z \in x \wedge \forall v\, (v\in z \to v\not\in x)))$.
\end{enumerate}

\noindent
We have that following insights.

\begin{theorem}
Let $\mathcal M$ be a finite model of {\tinyset}. Then, $\mathcal M$ is
isomorphic with $\tupel{\wp X, \in}$, where $X$ is a pure finite transitive
 set. 
\end{theorem}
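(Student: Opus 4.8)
The plan is to realise $\mathcal M$ as a transitive set by a Mostowski-style collapse and then recognise that transitive set as a full power set. Throughout, write $\in$ for the interpretation of the membership symbol in $\mathcal M$, and call an object $a$ \emph{elementable} if $a\in b$ holds for some $b$; let $E\subseteq M$ be the (finite) set of elementable objects. Reading an object as the collection of its elements, every object has all its elements in $E$, i.e.\ is a ``subset of $E$''. The three ingredients I aim for are: (i) $\in$ is well-founded; (ii) the collapse $\pi$ is an isomorphism onto a transitive set $T$; (iii) $T$ is exactly $\wp X$ for $X=\bigcup T$.

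The one genuinely non-routine step is (i), acyclicity of $\in$ (which, $M$ being finite, is the same as well-foundedness). Here Foundation alone is too weak: a two-element model with $a\in b\in a$ and $a\notin a$, $b\notin b$ satisfies both Extensionality and Foundation, so Restricted Adjunction must do the work. Suppose there were a cycle $c_0\in c_1\in\cdots\in c_{n-1}\in c_0$. Every $c_i$ is elementable, so, starting from the object $c_0$ (which already contains $c_{n-1}$) and adjoining $c_0,c_1,\dots,c_{n-2}$ one at a time — each step licensed by Restricted Adjunction because the adjoined object is elementable — I obtain an object $w$ containing all of $c_0,\dots,c_{n-1}$. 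Applying Foundation to $w$ produces an $\in$-minimal element $c_j\in w$, one with no element of $c_j$ lying in $w$; but some $c_{j-1}$ (indices modulo $n$) satisfies $c_{j-1}\in c_j$ and $c_{j-1}\in w$, a contradiction. Hence $\in$ is well-founded, and a maximal descending $\in$-chain ends in an object with no elements, so Extensionality gives a unique empty object $\emptyset$.

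With well-foundedness and Extensionality secured, the collapse $\pi(a)=\{\pi(c):c\in a\}$ is well defined by $\in$-recursion, and the standard $\in$-induction shows $\pi$ is injective and satisfies $a\in b\iff\pi(a)\in\pi(b)$; thus $\pi$ is an isomorphism of $\mathcal M$ onto $\tupel{T,\in}$ with $T=\pi[M]$ a transitive set of pure finite sets. I then set $X:=\bigcup T=\pi[E]$; transitivity of $T$ gives $X\subseteq T$, and a short check shows $X$ is itself transitive, finite and pure. It remains to prove $T=\wp X$. The inclusion $T\subseteq\wp X$ is immediate, since each object is a subset of $E$ and $\pi$ carries $E$ into $X$, so every element of $T$ is a subset of $X$. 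For $\wp X\subseteq T$, take any $S\subseteq X$ and write $S=\{\pi(c_1),\dots,\pi(c_k)\}$ with each $c_i\in E$; starting from $\emptyset$ and adjoining $c_1,\dots,c_k$ successively (legal, as the $c_i$ are elementable), I obtain an object $b$ with $\pi(b)=S$, whence $S\in T$. Therefore $T=\wp X$ and $\mathcal M\cong\tupel{\wp X,\in}$ with $X$ a pure finite transitive set. As indicated, the crux is the acyclicity step, where the point is that Foundation must be combined with Restricted Adjunction — it is the latter that actually excludes cycles.
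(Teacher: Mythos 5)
Your overall architecture (well-foundedness, Mostowski collapse, identification of the image with a full power set) matches the paper's, and the collapse and power-set steps are fine. But the step you yourself single out as the crux --- acyclicity --- has a genuine gap. Given a cycle $c_0\in c_1\in\cdots\in c_{n-1}\in c_0$, the object $w$ you build by adjoining $c_0,\dots,c_{n-2}$ to $c_0$ has as its elements \emph{all} elements of $c_0$ together with $c_0,\dots,c_{n-2}$; it contains $\{c_0,\dots,c_{n-1}\}$ but need not equal it. Foundation only yields \emph{some} $z\in w$ with no element in $w$, and nothing forces this $z$ to lie on the cycle: if, say, $c_0$ had an additional element $d$ with no elements at all, then $z=d$ witnesses Foundation for $w$ and no contradiction results. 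To make Foundation bite you need an object whose elements are \emph{exactly} the $c_i$, and for that you need an empty object to start the adjunction from --- but in your proof the empty object is only produced \emph{after} well-foundedness, so the argument is circular as it stands.

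The paper closes exactly this gap with a maximality argument that you are missing. Take $m$ with the maximal number of elements; by Restricted Adjunction (adjoining any elementable $y\notin m$ to $m$ would produce a strictly larger object) every elementable object is an element of $m$. Hence the Foundation-minimal element of $m$ (if $m$ is nonempty) has no elements in $m$ and therefore no elements at all, which yields the empty object without any appeal to acyclicity. From the empty object one then builds every subset of the elements of $m$ exactly --- in particular $\{c_0,\dots,c_{n-1}\}$ --- and your Foundation argument goes through verbatim. Your observation that Foundation alone is too weak and must be combined with Adjunction is correct; the point is that the combination has to pass through the maximal object and the empty set first. With that insertion the rest of your proof (the collapse, $X=\bigcup T$, and $T=\wp X$) is correct and agrees with the paper's.
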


\begin{proof}
Consider a finite model $\mathcal M$ of 
{\tinyset}.  Let $m$ be a set in $\mathcal M$ with the
maximal number of elements. By {\tinyset}2, it follows that
every element of some other set is in $m$. Thus, either $m$ is empty or its
$\in$-minimal element, guaranteed by {\tinyset}3, is empty.
Given that we have the empty set, we can use {\tinyset 2} to build any
subset of $m$. Clearly, from the external point of view, the relation $\in^{\mathcal M}$ restricted to the set of all $k$ such
that $k \in^{\mathcal M}m$ is a transitive, well-founded and extensional binary relation.

Thus, the set of $k \in^{\mathcal M}m$ equipped with $\in^{\mathcal M}$ is isomorphic to
$X$ equipped with $\in$, for some 
finite transitive set $X$. Hence, the model $\mathcal M$ is isomorphic to the powerset of $X$ equipped with $\in$.
\end{proof}

\begin{theorem}
Suppose $S$ is a $\Sigma^0_1$-sentence. There is a translation $S \mapsto \widetilde S$ of the arithmetical to the set-theoretical
 set-theoretical $\Sigma_1$-formulas, such that:
 $S$ is true iff, there is a pure transitive, hereditarily finite set $X$, such that $\tupel{\wp X, \in} \models \widetilde S$.  
\end{theorem}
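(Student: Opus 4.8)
The plan is to let $\widetilde S$ assert the existence of a single hereditarily finite object that codes a complete verification of $S$, and then to exploit the absoluteness of bounded formulas. Write the given $\Sigma^0_1$-sentence as $S = \exists m\, \theta(m)$ with $\theta$ bounded; using the standard coding we may assume $\theta$ is primitive recursive and that a witness for $S$ consists of $m$ together with a finite table recording the values of all subterms and the truth values of all subformulas occurring in $\theta(m)$. Under the standard interpretation of arithmetic in set theory, where a natural number is a finite von Neumann ordinal and $j<k$ becomes $j\in k$, each bounded arithmetic quantifier becomes a bounded set quantifier, and the only non-bounded ingredients are the graphs of $+$ and $\times$, which are defined by recursion and are thus $\Sigma_1$. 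The key bookkeeping step is to bundle these recursion witnesses together with $m$ into one finite object $w$; once the relevant computation table is part of $w$, checking that $w$ is a correct witness for $S$ involves only quantifiers bounded by $w$ and its elements. I would thus define $\widetilde S := \exists w\, \varphi_S(w)$, where $\varphi_S$ is $\Delta_0$ and says ``$w$ is a correct witness for $S$''; this makes $\widetilde S$ a set-theoretical $\Sigma_1$-sentence, as required.

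The second ingredient is the observation that $\tupel{\wp X, \in}$ is literally a transitive set under the real membership relation: if $a\in \wp X$ and $b\in a$ then $b\in X$, and since $X$ is transitive $b\subseteq X$, so $b\in\wp X$. Hence $\wp X$ is transitive, and the standard absoluteness of $\Delta_0$-formulas between a transitive set and the universe applies: for every $w\in\wp X$ we have $\tupel{\wp X,\in}\models\varphi_S(w)$ if and only if $\varphi_S(w)$ holds in $\mathbb{HF}$ (equivalently, in $V$).

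Now I would verify both directions. If $S$ is true, fix a genuine hereditarily finite witness $w$ with $\varphi_S(w)$, and put $X := \mathrm{TC}(w)$, the transitive closure of $w$; this $X$ is a pure transitive hereditarily finite set with $w\subseteq X$, so $w\in\wp X$, and by absoluteness $\tupel{\wp X,\in}\models\varphi_S(w)$, whence $\tupel{\wp X,\in}\models\widetilde S$. Conversely, if $\tupel{\wp X,\in}\models\widetilde S$ for some pure transitive hereditarily finite $X$, choose $w\in\wp X$ witnessing the leading existential; since $w\in\wp X\subseteq\mathbb{HF}$ is a genuine finite set and $\varphi_S$ is $\Delta_0$, absoluteness gives that $\varphi_S(w)$ holds outright, so $w$ really is a witness and $S$ is true. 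The main obstacle is the first paragraph: one must check carefully that the arithmetic operations occurring in $\theta$ can be captured by a $\Delta_0$ matrix over a suitably padded witness $w$, i.e.\ that all quantifiers in $\varphi_S$ can be made bounded, so that $\widetilde S$ is honestly $\Sigma_1$. The two directions are then routine given $\Delta_0$-absoluteness.
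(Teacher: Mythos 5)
Your proof is correct, and it reaches the same destination as the paper by a genuinely more self-contained route. The paper constructs $\widetilde S$ by passing through Barwise's $\Sigma$-definability machinery for {\sf KPU}: it first normalizes $S$ so that all bounding terms are variables, then invokes Barwise's results that ${\sf Nat}$, $\leq$, ${\sf S}$, $+$, $\times$ are $\Delta$- respectively $\Sigma$-definable and that conservative extensions by such symbols preserve $\Sigma$-ness, and finally applies the $\Sigma$-to-$\Sigma_1$ normal form theorem. You instead perform the arithmetization by hand: you fold the recursion witnesses for $+$ and $\times$ (the computation tables) into a single hereditarily finite witness $w$, so that the matrix $\varphi_S$ becomes honestly $\Delta_0$ and $\widetilde S=\exists w\,\varphi_S(w)$ is $\Sigma_1$ by construction. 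What your approach buys is independence from the {\sf KPU} apparatus and, more importantly, an explicit treatment of the step the paper leaves implicit: the paper only argues that $S$ is true iff $\widetilde S$ holds in $\mathbb{HF}$, whereas the statement is about the structures $\tupel{\wp X,\in}$; your observations that $\wp X$ is itself transitive, that $\Delta_0$-formulas are absolute between $\tupel{\wp X,\in}$ and $\mathbb{HF}$, and that one can take $X:=\mathrm{TC}(w)$ to house a genuine witness, supply exactly the missing bridge in both directions. What the paper's route buys is that the delicate bookkeeping you correctly flag as the main obstacle --- checking that all quantifiers in $\varphi_S$ can really be bounded by $w$ and its elements once the tables are packed in --- is delegated to theorems already proved in Barwise's book rather than re-verified. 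Both arguments are sound; yours is the more elementary and the more complete as written.
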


\begin{proof}
The proof is basically a careful translation of $S$ to the set theoretical language. One way of doing that is by using the realization in
Barwise's book \cite{barw:admi17}.  Barwise develops the theory of $\Sigma$-definability in {\sf KPU}.
We note that  $\mathbb{HF}$ is a model of {\sf KPU}. First Barwise proves that any $\Sigma$-formula is equivalent to a $\Sigma_1$-formula (Theorem I.4.3). 
He gives $\Delta$-definitions for predicates ${\sf Nat}(x)$ and $ \leq $ (Section I.5). Also he shows that the functions 
{\sf S} (Section I.5), $+$ and $\times$ are $\Sigma$-functions (Section I.6). 
Finally, he proves that, if we extend the language of {\sf KPU} by $\Delta$-predicates (Lemma I.5.2) and $\Sigma$-functions (Lemma I.5.4),
 then the extension will be conservative and each $\Sigma$-formula of the extended language will be equivalent to a $\Sigma$-formula of the original. 

The translation that we need is constructed from Barwise's results as follows. We first translate $S$ to a pure $\Sigma^0_1$-formula $S'$ where all 
bounding terms are variables. See, e.g., \cite[Section 3]{viss:onq17} for a sketch of how to do that. Then, we translate $S'$
 to a $\Sigma$-formula $S''$ in {\sf KPU} conservatively extended by {\sf Nat}, $\leq$, {\sf S}, $+$, and $\times$. 
  It is obvious that this translated formula is true in $\mathbb{HF}$ (extended by definitions) iff the original formula were true on standard natural numbers. 
  Then, we obtain an equivalent $\Sigma$-formula $S'''$ of pure {\sf KPU} and, finally, we get an equivalent $\Sigma_1$-formula $\widetilde S$.

\medent
Alternatively, we could extract a mapping from the proof of  \cite[Theorem I.2.3]{barw:admi17}.  
\end{proof}

\noindent
The finite models of {\tinyset} can only have cardinalities that are powers of 2. This is too restrictive for our purposes.
We will need that, if our small set theory plus $\widetilde S$ has a finite model, then it has a model of any
greater cardinality and that the construction of this larger model  is sufficiently uniform. There are many ways of achieving this. We sketch two of them. The first is
to allow elements with loops in the domain. We replace {\tinyset} by its relativization to the
elements $a$ such that $a \not\in a$. Alternatively, we replace identity by extensional equivalence in the axioms.
We adopt this second strategy and describe it in a bit more detail.

Let us write $a \approx b$ for $\forall x\, (x\in a \iff x \in b)$. We now define the theory:
\begin{enumerate}[$\tinyset^\star$1.]
\item
Congruence:\\
$\forall x\,\forall x'\, \forall y \, \forall y'\, ((x \approx x' \wedge y \approx y') \to (x\in y \iff x'\in y')) $.
\item
Restricted Adjunction of Elements:\\
$\forall x\, \forall y\,\forall z\, (y\in z \to \exists u\, \forall v\, (v \in u \iff (v\in x \vee v \approx y)))$.
\item
Foundation for Sets:\\
$\forall x\, \forall y\, (y\in x \to \exists z\, (z \in x \wedge \forall v\, (v\in z \to v\not\in x)))$.
\end{enumerate}

\noindent
We modify our mapping $S\mapsto \widetilde S$ 
by replacing $=$ by $\approx$. Say the resulting formula is $\breve S$.  We write $[S]$ for $\tinyset^\star+ \breve S$. 
It is clear that any model of $[S]$ can be modified into an $[S]$-model of greater cardinality simply by adding extra elements to
a $\approx$-equivalence class. We will return to this idea in the proof of Theorem~\ref{grappasmurf}.

\begin{remark}
Our use of $\tinyset$ and $\tinyset^\star$ was inspired by Harvey Friedman's use of theories of a number in \cite{frie:inte07}.
\end{remark}

\subsection{Relations between Theories}
Suppose $\Sigma$ and $\Theta$ are two disjoint finite signatures. A $\Sigma,\Theta$-translation
$\tau$ is given by a number $p$ and a mapping that sends the predicate symbols $P$ of $\Theta$ of arity $n$ to
a formula $\pi(\vec v, \vec w)$ where the $\vec v,\vec w$ are pairwise-disjoint designated variables and $\vec v$ has 
length $n$ and $\vec w$ has length $p$. The translation $\tau$ can be lifted to all  $\Sigma,\Theta$-formulas in which the $\vec w$ do not occur as
free variables as follows:
\begin{itemize}
\item
If $P$ is a symbol from $\Sigma$:
$P^\tau(\vec x) : \iff \tau(P)(\vec v,\vec w)[\vec v := \vec x]$, where we rename bound variables in $\tau(P)$ if
in case they obstruct substitutability of the $\vec x$.
\item
If $Q$ is a symbol from $\Theta$:
$Q^\tau(\vec x) : \iff Q(\vec x)$,
\item
$(\cdot)^\tau$ commutes with the logical connectives where we rename variables of quantifiers if
they would bind any of the $\vec w$.  
\end{itemize} 
Let $V$ and $W$ be $\Sigma,\Theta$-theories.
An $\Sigma,\Theta$-interpretation $\mathfrak t:V\to W$, is given by
a $\Sigma,\Theta$-translation $\tau$ and a parameter domain ${\sf par}_{\mathfrak t}(\vec w)$ in the $\Sigma,\Theta$-language.
We demand that $W \vdash \exists \vec w\, {\sf par}_{\mathfrak t}(\vec w)$ and that, for all $\Sigma,\Theta$-sentences $\alpha$, if $V\vdash \alpha$,
then  $W \vdash \forall \vec w\, ({\sf par}_{\mathfrak t}(\vec w) \to \alpha^{\tau})$. An interpretation is \emph{parameter-free} if the
dimension $p$ of the parameter domain associated to its translation $\tau$ is zero. In case of parameter-free interpretations, we will
simply omit the parameter domain ---since it is $\top$ modulo $W$-provable equivalence.

We write $W \jump V$ (or, more officially, $W \jump_{\Sigma,\Theta} V$) if there is an $\Sigma,\Theta$-interpretation $\mathfrak t: V \to W$. 
We will always suppress the $\Sigma,\Theta$-subscript since the relevant pair of signatures is, in all cases, contextually given.

We write  $W \jump_{\sf pf} V$ if there
is a parameter-free interpretation $\mathfrak t: V \to W$. 

We write $\alpha^{\mathfrak t}$ for $\alpha^{\tau_{\mathfrak t}}$, etcetera.

\begin{remark}
Kentaro Fujimoto  in his paper \cite{fuji:rela10} introduced the notion of \emph{relative truth definability}. This notion is our notion $\jump_{\sf pf}$
restricted to expansions with a truth predicate.
\end{remark}

\noindent
We have the following small insight.

\begin{theorem}\label{grappasmurf}
Consider any finitely axiomatized theory $\beta$.
Then, $W \jump \beta$ iff, for some translation $\tau$, we have $W \vdash \exists \vec w\, \beta^{\tau}$.
\end{theorem}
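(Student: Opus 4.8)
The plan is to prove the two directions separately, reusing a single translation $\tau$ throughout and exploiting that $\beta$, being finitely axiomatized, may be treated as a single sentence.

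For the forward direction, suppose $W \jump \beta$ via an interpretation $\mathfrak t$ with translation $\tau$ and parameter domain ${\sf par}_{\mathfrak t}(\vec w)$. Since $\beta \vdash \beta$, the defining clause of an interpretation yields $W \vdash \forall \vec w\, ({\sf par}_{\mathfrak t}(\vec w) \to \beta^{\tau})$, and by definition we also have $W \vdash \exists \vec w\, {\sf par}_{\mathfrak t}(\vec w)$. Combining these two facts inside $W$ --- take a witness $\vec w$ for ${\sf par}_{\mathfrak t}$ and apply the universal statement to it --- gives $W \vdash \exists \vec w\, \beta^{\tau}$, which is the desired conclusion, and with the very same $\tau$.

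For the converse, suppose $W \vdash \exists \vec w\, \beta^{\tau}$ for some translation $\tau$. I would build an interpretation $\mathfrak t \colon \beta \to W$ by reusing $\tau$ and simply declaring the parameter domain to be ${\sf par}_{\mathfrak t}(\vec w) :\iff \beta^{\tau}$ (here finite axiomatizability of $\beta$ is exactly what makes $\beta^{\tau}$ a single admissible parameter-domain formula). The requirement $W \vdash \exists \vec w\, {\sf par}_{\mathfrak t}(\vec w)$ is then the hypothesis itself, so it remains to check the interpretation condition: whenever $\beta \vdash \alpha$ we must have $W \vdash \forall \vec w\, (\beta^{\tau} \to \alpha^{\tau})$. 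Since $\beta$ is a sentence, $\beta \vdash \alpha$ gives $\vdash \beta \to \alpha$ in pure predicate logic, and the key lemma is that $(\cdot)^{\tau}$ preserves logical validity: translating a derivation of $\beta \to \alpha$ step by step yields $\vdash \beta^{\tau} \to \alpha^{\tau}$, these formulas having free variables only among $\vec w$, whence $\vdash \forall \vec w\, (\beta^{\tau} \to \alpha^{\tau})$ and \emph{a fortiori} the same is $W$-provable.

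The only genuine content is this translation-preserves-provability lemma; it holds here because $\tau$ merely substitutes formulas for atomic predicates and commutes with the connectives and quantifiers, with no domain relativization, so a Hilbert-style proof maps to a proof. I expect the one thing to be careful about to be the bookkeeping with the parameters $\vec w$: one must ensure they are not captured when translating quantifiers, which is precisely the renaming convention already built into the definition of $(\cdot)^{\tau}$.
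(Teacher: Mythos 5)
Your proof is correct and follows exactly the paper's own (much terser) argument: the forward direction by instantiating the interpretation condition at $\beta$ itself and combining with the nonemptiness of the parameter domain, and the converse by taking $\beta^{\tau}$ as the parameter domain. The additional care you take with the translation-preserves-provability lemma and with variable capture of the $\vec w$ is exactly the content the paper leaves implicit.
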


\begin{proof}
From left to right is immediate. From right to left, we take as parameter-domain  $\beta^\tau$.
\end{proof}

\noindent
As a consequence it suffices, in case the target theory of an interpretability claim is finitely axiomatized,
to just specify the translation.

\begin{remark}
Mycielski, Pudl\'ak and Stern, in their fundamental  paper \cite{myci:latt90}, \emph{define} interpretability with parameters in the
style of Theorem~\ref{grappasmurf} without a parameter-domain. 
\end{remark}

\medent
We will need the following insight.

\begin{theorem}\label{hulpsmurf}
Consider any true $\Sigma^0_1$-sentence $S$.
Suppose $\mathcal M$ is a model of $\tinyset + \widetilde S^N$ of cardinality $n$.
Then, $\exists x_0\, \ldots, x_{n-1}\,\bigwedge_{i<j<n} x_i\neq x_j \jump [S]$.
\end{theorem}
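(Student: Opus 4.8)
The plan is to hardwire $\mathcal M$ into a parametric reinterpretation of $\in$ that ranges over the entire domain of a model of $\delta_n := \exists x_0\ldots x_{n-1}\,\bigwedge_{i<j<n}x_i\neq x_j$, exploiting that $\tinyset^\star$ tolerates blowing up $\approx$-classes. I would first note that $\mathcal M$ is already a model of $[S]=\tinyset^\star+\breve S$. Indeed, the extensionality axiom of $\tinyset$ forces $\approx$ to coincide with $=$ throughout $\mathcal M$; under this identification the congruence axiom becomes trivial, the restricted adjunction and foundation axioms of $\tinyset^\star$ collapse to their $\tinyset$-forms, and $\breve S$ collapses to $\widetilde S^N$. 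Hence $\mathcal M\models[S]$.

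Next I would build the translation. Enumerate the domain of $\mathcal M$ as $a_0,\ldots,a_{n-1}$, take parameters $\vec w=w_0,\ldots,w_{n-1}$ with parameter domain ${\sf par}(\vec w):=\bigwedge_{i<j<n}w_i\neq w_j$, and carve the domain into $n$ blocks by setting $C_i(x):=(x=w_i)$ for $i<n-1$ and $C_{n-1}(x):=\bigwedge_{i<n-1}x\neq w_i$. Modulo ${\sf par}$ these blocks partition the domain and each contains its own $w_i$. I then translate membership by the $\in$-table of $\mathcal M$,
\[ x\mathrel{\in^\tau}y\;:\iff\;\bigvee_{\{(i,j)\,:\,a_i\in^{\mathcal M}a_j\}}\big(C_i(x)\wedge C_j(y)\big). \]
Since $\delta_n$ lives in the pure identity language, this $\tau$ is a legitimate translation into it.

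The key step is to check that in any $\mathcal N\models\delta_n$, with $\vec w$ chosen distinct, the definable structure $\tupel{\mathcal N,{\in^\tau}}$ is a model of $[S]$. The block map $\kappa$ sending $x$ to the $i$ with $C_i(x)$ is a surjection onto $\{0,\ldots,n-1\}$ satisfying $x\mathrel{\in^\tau}y$ iff $a_{\kappa(x)}\in^{\mathcal M}a_{\kappa(y)}$, so $\tupel{\mathcal N,{\in^\tau}}$ is exactly $\mathcal M$ with each point blown up into a nonempty $\approx$-class. A routine induction on formulas then yields $\tupel{\mathcal N,{\in^\tau}}\models\phi(\vec x)$ iff $\mathcal M\models\phi(a_{\kappa(x_1)},\ldots)$ for every $\{\in,\approx\}$-formula $\phi$, using surjectivity of $\kappa$ for the quantifier case and the extensionality of $\mathcal M$ to equate $\approx^\tau$-equivalence with sharing a block. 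Since $\mathcal M\models[S]$, this transfer gives $\tupel{\mathcal N,{\in^\tau}}\models[S]$, i.e.\ $[S]^\tau$ holds in $\mathcal N$ at $\vec w$. As $\delta_n$ always furnishes $n$ distinct elements to serve as $\vec w$, every model of $\delta_n$ satisfies $\exists\vec w\,[S]^\tau$; by completeness $\delta_n\vdash\exists\vec w\,[S]^\tau$, and since $[S]$ is finitely axiomatized Theorem~\ref{grappasmurf} delivers $\delta_n\jump[S]$.

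I expect the main obstacle to be exactly this transfer lemma for the blow-up: one must verify that the restricted adjunction and foundation axioms of $\tinyset^\star$, together with $\breve S$, survive the collapse along $\kappa$---the first two needing surjectivity of $\kappa$ to lift witnesses from $\mathcal M$ back to $\mathcal N$, and the last relying on $\in^\tau$ and $\approx^\tau$ depending only on the blocks. Everything else is the routine bookkeeping of a parameter-defined finite-diagram interpretation.
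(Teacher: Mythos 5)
Your proposal is correct and follows essentially the same route as the paper's own proof: distinct parameters $w_0,\dots,w_{n-1}$ carve the domain into $n$ blocks (one of them absorbing all surplus elements), the membership relation is translated by the finite $\in$-diagram of $\mathcal M$, and the verification rests on the fact that $[S]$ is built from $\in$ and $\approx$ only and is therefore invariant under blowing up $\approx$-classes. The paper dismisses the transfer step as ``easy to see''; your explicit block surjection $\kappa$ and the induction on $\{\in,\approx\}$-formulas supply exactly the intended justification.
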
 

\begin{proof}
Suppose the domain of $\mathcal M$ is $m_0,\ldots,m_{n-1}$.
We take as parameter-domain $\bigwedge_{i<j<n} w_i \neq w_j$.
We take as translation:
\begin{itemize}
\item
$x \sim y : \iff  \bigwedge_{1< i < n}(x \neq w_i \wedge y \neq w_i) \vee \bigvee_{1<i  < n} (x=w_i \wedge y = w_i)$.
\item 
$v_0\mathrel{R_{\vec w}} v_1 := \bigvee \verz{(v_0 \sim w_i \wedge v_1\sim w_j) \mid i,j<n\text{ and } R^{\mathcal M}(d_i,d_j)} $
\end{itemize}
It is easy to see that the resulting parameter-domain plus translation witness  $\exists x_0\, \ldots, x_{n-1}\,\bigwedge_{i<j<n} x_i\neq x_j \jump [S]$.
\end{proof}

\noindent
Here are some further notions.
\begin{itemize}
\item
$U \dashv_{\sf c} \alpha$ iff $\alpha \vdash U$ and, for all $A$, if $\alpha \vdash A$, then $U \vdash A$. In case $U \dashv_{\sf c} \alpha $, we say that
$\alpha$   is \emph{a  conservative extension of $U$}.
\item
$U \dashv_{\sf mc} \alpha$ iff $\alpha \vdash U$ and every model of $U$ can be expanded to a model of $\alpha$. 
In case $\alpha \dashv_{\sf mc} U$, we say that
$\alpha$   is \emph{a  model-conservative extension of $U$}.
\item
$\alpha \vdashneq \beta$ iff $\alpha \vdash \beta$ and $\beta \nvdash \alpha$.
\item
$\alpha \jumpneq_{\sf pf} \beta$ iff $\alpha \jump_{\sf pf} \beta$ and $\beta \not \jump_{\sf pf} \alpha$.
\item
$\alpha \jumpneq \beta$ iff $\alpha \jump \beta$ and $\beta \not \jump \alpha$.
\end{itemize}

\noindent 
We note the following useful fact.

\begin{fact}
Let $\sqsubseteq$ be one of $\dashv$, $\dashv_{\sf c}$ or $\dashv_{\sf mc}$.
Let $\preceq$ be one of $\dashv$, $\jumpb_{\sf pf}$ or $\jumpb$.
Suppose $U \sqsubseteq \alpha$ and $U \dashv \beta \preceq \alpha$.
Then, $U \sqsubseteq \beta$.
\end{fact}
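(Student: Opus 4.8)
The plan is to carry out a case analysis on the two parameters $\sqsubseteq$ and $\preceq$, reducing everything to one structural feature shared by all three admissible orderings $\preceq$: each of them relates $\beta$ to $\alpha$ in a way that \emph{leaves the $\Sigma_U$-symbols untouched and involves no domain relativisation}. First I would dispose of the case where $\sqsubseteq$ is $\dashv$, which is immediate, since the desired conclusion $U \dashv \beta$ is literally the hypothesis, for any $\preceq$. Thus the content lies in the two cases $\sqsubseteq$ equal to $\dashv_{\sf c}$ or $\dashv_{\sf mc}$; in both, the first clause $\beta \vdash U$ is again handed to us by $U \dashv \beta$, so only the extra conservativity, respectively model-conservativity, clause needs to be established. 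The key preliminary observation I would isolate is the following consequence of working with interpretations that translate the $U$-language identically and carry no domain formula: if $\mathfrak t$ witnesses $\alpha \jump \beta$ (with or without parameters), then (i) for every $\Sigma_U$-sentence $A$ we have $A^{\mathfrak t} = A$ up to renaming of bound variables, because $A$ has no $\Theta$-symbols, its quantifiers are unrelativised, and its $\Sigma_U$-symbols go to themselves; and (ii) for every $\mathcal N \models \alpha$ and every tuple $\vec a$ with $\mathcal N \models {\sf par}_{\mathfrak t}(\vec a)$, the internal model $\mathcal N^{\mathfrak t}[\vec a]$ has exactly the same $\Sigma_U$-reduct as $\mathcal N$. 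Both points fall out of unwinding the definition of $\Sigma_U,\Theta$-translation.

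For the case where $\sqsubseteq$ is $\dashv_{\sf c}$, I would fix a $\Sigma_U$-sentence $A$ with $\beta \vdash A$ and aim at $U \vdash A$; since $\alpha$ is conservative over $U$ it suffices to derive $\alpha \vdash A$. If $\preceq$ is $\dashv$ this is trivial, as $\alpha \vdash \beta \vdash A$. If $\preceq$ is $\jumpb_{\sf pf}$ or $\jumpb$, pick $\mathfrak t$ witnessing $\alpha \jump \beta$; the interpretation condition turns $\beta \vdash A$ into $\alpha \vdash \forall \vec w\,({\sf par}_{\mathfrak t}(\vec w) \to A^{\mathfrak t})$, which by (i) is $\alpha \vdash \forall \vec w\,({\sf par}_{\mathfrak t}(\vec w) \to A)$. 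As $\vec w$ does not occur in $A$ and $\alpha \vdash \exists \vec w\,{\sf par}_{\mathfrak t}(\vec w)$, this already gives $\alpha \vdash A$.

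For the case where $\sqsubseteq$ is $\dashv_{\sf mc}$, I would argue model-theoretically. Given any $\mathcal M \models U$, model-conservativity of $\alpha$ lets me expand $\mathcal M$ to some $\mathcal M^{+} \models \alpha$ with the same $\Sigma_U$-reduct. If $\preceq$ is $\dashv$ then $\mathcal M^{+} \models \beta$ and we are done. Otherwise let $\mathfrak t$ witness $\alpha \jump \beta$; using $\alpha \vdash \exists \vec w\,{\sf par}_{\mathfrak t}(\vec w)$ I choose $\vec a$ in $\mathcal M^{+}$ with $\mathcal M^{+} \models {\sf par}_{\mathfrak t}(\vec a)$ and form $\mathcal N := (\mathcal M^{+})^{\mathfrak t}[\vec a]$, which models $\beta$ by the semantics of interpretation. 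By (ii), $\mathcal N$ has the same $\Sigma_U$-reduct as $\mathcal M^{+}$, namely $\mathcal M$, so $\mathcal N$ is the required expansion of $\mathcal M$ to a model of $\beta$.

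I expect the only genuine obstacle to be conceptual rather than computational: every non-trivial step rests on the interpretations preserving the $U$-language identically and having full domain, which is precisely what makes the syntactic identity in (i) and the reduct-preservation in (ii) hold. I would therefore stress that this hypothesis is essential --- if $\mathfrak t$ were allowed to move the $\Sigma_U$-symbols, neither (i) nor (ii) would survive --- so the Fact is at bottom a verification that the chosen orderings on the expanded theories are compatible with the three notions of extension of $U$.
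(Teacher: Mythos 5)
Your proof is correct and is exactly the argument intended: the paper states this Fact without proof, and your case split together with observations (i) and (ii) --- that a $\Sigma_U,\Theta$-translation fixes every $\Sigma_U$-sentence up to renaming of bound variables and leaves the $\Sigma_U$-reduct of the interpreting model unchanged --- is precisely the verification the authors leave to the reader. The same reasoning is used implicitly later in the paper, where from $\gamma \jump B$ for a $U$-sentence $B$ the authors conclude $\gamma \vdash B$.
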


\section{Existence}\label{existence}
In this section, we provide a proof of the Kleene-Craig-Vaught result. We also provide some insights in its
direct neighborhood.

\begin{theorem}[Kleene-Craig-Vaught]\label{kcv}
Suppose $U$ is a recursively enumerable theory without finite models. 
We expand the signature of $U$ with $\Theta$ with ${\sf M}_\Theta \geq 2$.
Then, there is an $\alpha$ such that $U \dashv_{\sf c} \alpha$. 
\end{theorem}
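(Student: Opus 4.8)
The plan is to take for $\alpha$ the finite theory ${\sf C}_U$ augmented by a single sentence asserting that every axiom of $U$ is true according to the built-in satisfaction predicate $\sat$. First I would reduce to the case $\Theta=\verz R$ with $R$ a single fresh binary symbol. As ${\sf M}_\Theta\geq 2$, fix $P\in\Theta$ of arity $m\geq 2$ and read $xRy$ as an abbreviation for $P(x,y,y,\ldots,y)$. A solution $\alpha_0$ for $\verz R$ then yields a $\Sigma_U+\Theta$-sentence $\alpha$ by unfolding this abbreviation throughout $\alpha_0$; conservativity transfers because any $\Sigma_U+\Theta$-model of $\alpha$ restricts, via the abbreviation, to a $\Sigma_U+\verz R$-model of $\alpha_0$, while conversely any model of $\alpha_0$ expands to one of $\alpha$ by defining $P(x,y,z_3,\ldots,z_m):\iff(xRy\wedge z_3=y\wedge\cdots\wedge z_m=y)$ and leaving the remaining symbols of $\Theta$ empty.

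Working with the binary $R$, recall that ${\sf C}_U$ is finite, interprets ${\sf S}^1_2$ via $N$, and carries a satisfaction predicate $\sat$ for $\Sigma_U$-formulas obeying the Tarskian commutation conditions. Since $U$ is recursively enumerable, fix a $\Sigma^0_1$ arithmetical formula $\mathrm{Ax}_U(x)$ defining the codes of its axioms, and set
\[ \alpha \;:=\; {\sf C}_U \,\wedge\, \forall x\,\big(\mathrm{Ax}_U^N(x)\to\sat(\emptyset,x)\big), \]
where $\emptyset$ is the empty assignment; this is a single sentence. For $\alpha\vdash U$: each axiom $B$ of $U$ has a standard code $n_B$, so $\mathrm{Ax}_U(n_B)$ is a true $\Sigma^0_1$-sentence and hence, by $\Sigma^0_1$-completeness of ${\sf S}^1_2$ under $N$, ${\sf C}_U\vdash\mathrm{Ax}_U^N(\underline{n_B})$; the new conjunct then gives $\sat(\emptyset,\gnum{B})$, while a finite unfolding of the commutation conditions for the fixed standard $B$ yields ${\sf C}_U\vdash(\sat(\emptyset,\gnum{B})\iff B)$. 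Thus $\alpha\vdash B$ for every axiom, i.e.\ $\alpha\vdash U$.

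For conservativity I argue contrapositively. If $U\nvdash A$, then $U+\neg A$ has a model, necessarily infinite since $U$ omits finite models, hence by L\"owenheim--Skolem a countably infinite $\mathcal M$. The crux is to expand $\mathcal M$ to a model of $\alpha$ in which $N$ denotes the genuine standard integers and $\sat$ is honest Tarskian satisfaction for $\mathcal M$. I would do this by interpreting $\in$, hence $R$, on $\mathbb{HF}(M)$, the hereditarily finite sets over the countable domain $M$, so that $N$ lands on the finite ordinals and the recursively defined $\sat$ agrees with real satisfaction on standard formulas. In such an expansion the only witnesses $x$ of $\mathrm{Ax}_U^N$ are standard codes of genuine axioms, each $\sat$-true because $\mathcal M\models U$; so the extra conjunct holds, while the $\Sigma_U$-reduct remains $\mathcal M\models\neg A$. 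Hence $\alpha\nvdash A$, and therefore $U\dashv_{\sf c}\alpha$.

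The main obstacle is exactly this expansion step: securing simultaneously that $N$ is standard and that $\sat$ is the true satisfaction relation over an arbitrary countable $\mathcal M$. The standing facts quoted in the preliminaries --- that every countably infinite $\Sigma$-structure expands to a model of ${\sf C}_\Sigma$ and that $N$ can be made standard over $\mathbb{HF}$ --- supply the template; the remaining work is to run the $\mathbb{HF}(M)$-construction relative to $\mathcal M$ and to verify that the commutation conditions pin $\sat$ to honest satisfaction on standard formulas. No Tarskian obstruction intervenes, since $\sat$ is a satisfaction predicate only for the $\Sigma_U$-language and not for the richer $\Sigma_U+\verz R$-language in which it lives.
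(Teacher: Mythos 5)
Your proposal is correct and follows essentially the same route as the paper: the sentence $\alpha$ is ${\sf C}_U$ plus the assertion that everything satisfying the arithmetized ($\Sigma^0_1$) axiom/theorem predicate under $N$ is declared true by $\sat$, with $\alpha\vdash U$ via $\Sigma^0_1$-completeness and the commutation conditions, conservativity via expanding an infinite model of $U+\neg A$ by a standard $\mathbb{HF}$-interpretation of $\in$ with honest satisfaction, and the general case of $\Theta$ handled by padding a symbol of arity $\geq 2$ with dummy arguments. The only cosmetic differences are that you axiomatize over the axioms rather than the theorems of $U$ and argue conservativity contrapositively through a countable model, whereas the paper expands an arbitrary model of $U$ (thereby getting model-conservativity for free).
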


\noindent
We note that if $U$ has a finite conservative extension $\alpha$, then $U$ must be recursively enumerable. So, the existence theorem is best possible as far as
the complexity of $U$ is concerned.

\begin{proof}
We first treat the case where we expand $U$ with a binary relation symbol $R$.
Let $S(x)$ be a $\Sigma^0_1$-formula that represents the theorems of $U$. Let {\sf true} be the truth-predicate that is based
on the satisfaction predicate {\sat} of the theory ${\sf C}_U$.
Let $\alpha := ({\sf C}_U \wedge \forall A\in {\sf sent}_U^N\, (S^N(A) \to {\sf true}(A)))$.
Clearly $\alpha \vdash U$. 

Consider any model $\mathcal M$ of $U$. Since the domain of $\mathcal M$
 is infinite, we can clearly expand this to a model that is $\mathbb{HF}$ as interpretation of $\in$ and that yields modulo arithmetization
 the standard satisfaction predicate as interpretation of \sat.
 
 \medent
 We extend the result to any $\Theta$ with ${\sf M}_\Theta\geq 2$, by using any predicate $P(\vec x\,)$ of $\Theta$ of arity $\geq 2$ as 
 replacement of $R$, using the first two argument places to mimic the argument places of $R$ and treating the remaining ones as dummies.
 We also treat the remaining predicate symbols of $\Theta$ as don't care.
\end{proof}

\noindent
We show how to extend Theorem~\ref{kcv} to the case where $U$ has only finitely many finite models modulo isomorphism.

\begin{lemma}\label{knutselsmurf}
Let $U$ be a recursively enumerable theory. 
We expand the signature of $U$ with $\Theta$ with ${\sf M}_\Theta \geq 2$.
Suppose that, for some $A$, we have  $U \dashv A$ and $U+ \neg \, A$ has no finite models. 
Then, $U$ has a finitely axiomatized
model-conservative extension $\alpha$ in the expanded language.
\end{lemma}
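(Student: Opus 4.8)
The plan is to apply the Kleene--Craig--Vaught construction not to $U$ itself, which may have finite models, but to $U+\neg\,A$, and then to glue in the models of $U+A$ for free. First I would note that, since $U$ is recursively enumerable and $A$ is a single $\Sigma_U$-sentence, the theory $U+\neg\,A$ is again recursively enumerable; and by hypothesis it has no finite models. Hence Theorem~\ref{kcv} applies to $U+\neg\,A$ (here the assumption ${\sf M}_\Theta\geq 2$ is what is needed) and yields a finitely axiomatized $\beta_0$ of signature $\Sigma_U+\Theta$ with $\beta_0\vdash U+\neg\,A$. The key point I would exploit is that the \emph{proof} of Theorem~\ref{kcv} in fact establishes model-conservativity: every (necessarily infinite) model of $U+\neg\,A$ expands to a model of $\beta_0$, by interpreting $\in$ as $\mathbb{HF}$ and $\sat$ as the arithmetized standard satisfaction predicate. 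So I take $\beta_0$ to be this model-conservative extension of $U+\neg\,A$.

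I would then set $\alpha:=\beta_0\vee A$, which is again a single sentence of signature $\Sigma_U+\Theta$, hence finitely axiomatized. That $\alpha\vdash U$ is immediate, since each disjunct proves $U$: we have $\beta_0\vdash U$, and, by the assumption $U\dashv A$, also $A\vdash U$. It then remains to verify model-conservativity, namely that every model $\mathcal M$ of $U$ expands to a model of $\alpha$.

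Here I would split on whether $A$ holds in $\mathcal M$. If $\mathcal M\models A$, then, because $A$ is a $\Sigma_U$-sentence not mentioning $\Theta$, \emph{any} $\Theta$-expansion of $\mathcal M$---for instance the one interpreting every symbol of $\Theta$ as empty---already satisfies $A$, hence $\alpha$. If instead $\mathcal M\models\neg\,A$, then $\mathcal M$ is a model of $U+\neg\,A$, so by the model-conservativity of $\beta_0$ it expands to some $\mathcal M^+\models\beta_0$, whence $\mathcal M^+\models\beta_0\vee A=\alpha$. In either case $\mathcal M$ expands to a model of $\alpha$, which is exactly $U\dashv_{\sf mc}\alpha$.

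The only real content is the opening observation that Theorem~\ref{kcv}, though phrased as a conservativity statement, actually produces a model-conservative extension as soon as the base theory has no finite models; the disjunction trick $\beta_0\vee A$ then costs nothing, since the models of $U$ that satisfy $A$ place no demands on the $\Theta$-symbols and may be expanded at will. I would expect the only point requiring care to be the bookkeeping that $U+\neg\,A$ meets both hypotheses of Theorem~\ref{kcv}---recursive enumerability and absence of finite models---and that the $\beta_0$ it provides genuinely gives model-conservativity and not merely deductive conservativity.
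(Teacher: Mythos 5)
Your proposal is correct and follows essentially the same route as the paper: apply Theorem~\ref{kcv} to $U+\neg\,A$ (observing that its proof yields model-conservativity, not just conservativity), and take the disjunction with $A$, splitting on whether a given model of $U$ satisfies $A$. The paper's proof is exactly this, only more tersely stated.
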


\begin{proof}
We apply Theorem~\ref{kcv} to $U+\neg\, A$. This gives us a $\beta$ with $U +\neg \, A \dashv_{\sf mc} \beta$.

We show that $U \dashv_{\sf mc} (A \vee \beta) =: \alpha$.
Clearly, $A \vee \beta \vdash U$.
Let $\mathcal M$ be a model of $U$. In case $\mathcal M \models A$, we are done. In case $\mathcal M \not\models A$,
we have $\mathcal M \models U+\neg\, A$. 
So, we can expand $\mathcal M$ to a model of $\beta$, and we are, again, done.
\end{proof}

\begin{theorem}\label{existencetwee}
Suppose $U$ is a recursively enumerable theory and suppose $U$ has only finitely many finite models modulo isomorphism. 
We expand the signature of $U$ with $\Theta$ with ${\sf M}_\Theta \geq 2$.
Then, for some $\alpha$, we have   $U\dashv_{\sf mc} \alpha$. 
\end{theorem}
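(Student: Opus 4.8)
The plan is to reduce Theorem~\ref{existencetwee} to Lemma~\ref{knutselsmurf} by finding a sentence $A$ in the $U$-language that ``names'' the finite models of $U$, so that $U + \neg\, A$ has no finite models, while still $U \dashv A$. Since $U$ has only finitely many finite models up to isomorphism, say $\mathcal N_0, \ldots, \mathcal N_{k-1}$ of cardinalities $n_0, \ldots, n_{k-1}$, each such model is, as a finite structure in a finite relational signature, characterized up to isomorphism by a single first-order sentence. Concretely, for each $\mathcal N_i$ I would write the sentence $\delta_i$ that asserts ``there exist exactly $n_i$ elements, and the atomic $\Sigma_U$-diagram on them is exactly that of $\mathcal N_i$'': this is the usual Scott-style description of a finite structure, $\exists x_0 \cdots \exists x_{n_i-1}\,(\bigwedge_{p<q} x_p \neq x_q \wedge \forall y\, \bigvee_p y = x_p \wedge \Delta_i)$, where $\Delta_i$ is the conjunction over all relation symbols $P$ of $\Sigma_U$ and all tuples of the listed variables of the appropriate literal (positive where $\mathcal N_i \models P$, negated otherwise).

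Next I would set $A := \bigvee_{i<k} \delta_i$, the disjunction asserting that the model is isomorphic to one of the finitely many finite models of $U$. The two hypotheses of Lemma~\ref{knutselsmurf} must then be checked. First, $U \dashv A$: this holds because $A$ is a theorem-free \emph{addition}, i.e. we only need $A$ to be a genuine sentence of the $U$-language extending nothing, and indeed every finite model of $U$ satisfies some $\delta_i$, so $A$ is consistent with $U$ and $U+A$ is a legitimate extension. (Here $\dashv$ is just the extension relation, so $U \dashv A$ asks only that $A \vdash U$ for the combined theory $U + A$; since we form $U + A$, the clause $A \vdash U$ is automatic once we read $A$ as the finite theory $U \wedge A$.) Second, and this is the real point, $U + \neg\, A$ has no finite models: any finite model of $U$ is isomorphic to some $\mathcal N_i$, hence satisfies $\delta_i$ and therefore $A$; so no finite model can satisfy $\neg\, A$.

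With both hypotheses of Lemma~\ref{knutselsmurf} in hand, the lemma delivers a finitely axiomatized model-conservative extension $\alpha$ of $U$ in the expanded language, which is exactly the conclusion $U \dashv_{\sf mc} \alpha$.

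The step I expect to require the most care is the verification that each finite model of $U$ is \emph{definable up to isomorphism} by a first-order sentence and that the finitely many such $\delta_i$ can be written down explicitly; this is routine for finite structures in a finite relational signature, but one must be careful that $\Sigma_U$ is finite and relational (which the Preliminaries guarantee) so that the atomic diagram $\Delta_i$ is a finite conjunction. A secondary subtlety is making sure the reading of $U \dashv A$ matches the hypothesis format of Lemma~\ref{knutselsmurf}: there $A$ is a $\Sigma_U$-sentence with $U \dashv A$ meaning $U + A$ extends $U$, which is immediate. Everything else is an application of the already-proved lemma, so no new model-theoretic machinery beyond the finite-model description is needed.
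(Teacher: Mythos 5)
Your construction is exactly the paper's: take $A$ to be the disjunction of the categorical first-order descriptions of the finitely many finite models of $U$ and apply Lemma~\ref{knutselsmurf}. One small repair to your justification of $U \dashv A$ (which in the paper's notation means $A \vdash U$): the reason is not that $A$ is ``consistent with $U$'' or that one reads $A$ as $U \wedge A$ (which is not a sentence, since $U$ need not be finitely axiomatizable), but simply that each $\delta_i$ pins down $\mathcal N_i$ up to isomorphism, so every model of $\delta_i$ is isomorphic to a model of $U$ and hence $\delta_i \vdash U$; with that, the rest of your argument goes through verbatim.
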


\begin{proof}
We take $A$ the disjunction of the model descriptions of the finite models of $U$. It is easily seen that
$A$ satisfies the conditions of Lemma~\ref{knutselsmurf}.
\end{proof}

\noindent
We note that the set of finite models of a recursively enumerable theory might be complete $\Pi^0_1$.
On the other hand, whenever $U\vdash_{\sf c} \alpha$, the set of finite models for $U$ is NP, by Fagin's theorem.
See e.g. \cite{imme:desc12}. So, not all recursively enumerable $U$ can have a finite extension $\alpha$ in an
expanded language such that $U\dashv_{\sf c} \alpha$. 

\begin{question}\label{Q1}
It there a recursively enumerable theory $U$ with an NP set of finite models (modulo isomorphism)
such that there is no $\alpha$ with $U \dashv_{\sf c} \alpha$?
\end{question}

\begin{question}\label{Q2}
Can we find a recursively enumerable $U$ and an $\alpha$ in an expanded language, such that $U \dashv_{\sf c} \alpha$,
where there is no $\beta$ such that $U \dashv_{\sf mc} \beta$?
\end{question}

\noindent
In case we work with languages without identity symbol, the situation changes, since we could have
$U \dashv_{\sf c} \alpha$, where $U$ has finite models and $\alpha$ has not. In fact, by slightly modifying
the proof of Theorem~\ref{kcv}, we can, in the identity-free case, find an $\alpha$ such that $U \dashv_{\sf c} \alpha$,
for any recursively enumerable $U$.
We note that this last observation does not hold for the case of $\dashv_{\sf mc}$. In this case the application of
Fagin's Theorem still obtains.

\medent
In case we expand the language of $U$ only with unary predicates,
we need not be able to find an $\alpha$ such that $U \dashv_{\sf c} \alpha$, as shown in \cite{crai:fini58}.

\medent
Finally, we consider what happens when we consider the relation $ \dashv $ in stead of $\dashv_{\sf c}$ and
$\dashv_{\sf mc}$. It is clear that we can always find an $\alpha$ such that $U \dashv \alpha$, to wit $\alpha := \bot$.
As a consolation, for those who find this example too trifling, we have the following result.

\begin{theorem}\label{tuinsmurf}
Suppose $U$ is a consistent recursively enumerable theory. 
We expand the signature of $U$ with a binary relation symbol $R$.
Then, there is a consistent $\alpha$ in the expanded language such that $U \dashv \alpha$. 
\end{theorem}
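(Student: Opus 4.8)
The plan is to split on whether $U$ has a finite model, handling the model-free case by a direct appeal to Theorem~\ref{kcv} and the remaining case by an explicit finite construction. Note that $U \dashv \alpha$ is just the requirement $\alpha \vdash U$, so the trifling choice $\alpha := \bot$ is excluded only because we now insist that $\alpha$ be consistent.

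First I would treat the case where $U$ has no finite models. Then $U$ is recursively enumerable and has no finite models, so Theorem~\ref{kcv} applies and produces an $\alpha$ with $U \dashv_{\sf c} \alpha$. This immediately gives $\alpha \vdash U$, that is, $U \dashv \alpha$. To see that $\alpha$ is consistent, I would note that $U$, being consistent, has a model, and that the construction in the proof of Theorem~\ref{kcv} expands any model of $U$ to a model of $\alpha$; hence $\alpha$ is satisfiable. Equivalently, a conservative extension of the consistent theory $U$ cannot be inconsistent.

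In the remaining case $U$ has a finite model $\mathcal M$, and here I would argue directly. Expand $\mathcal M$ to a structure $\mathcal M'$ of signature $\Sigma_U + \{R\}$ by interpreting $R$ arbitrarily, say as the empty relation; $\mathcal M'$ is still finite. Since both $\Sigma_U + \{R\}$ and the domain of $\mathcal M'$ are finite, let $\alpha$ be the model description of $\mathcal M'$ --- a single sentence of the expanded language whose models are exactly the structures isomorphic to $\mathcal M'$. Then $\alpha$ is consistent, as $\mathcal M'$ satisfies it. Moreover, for any $\Sigma_U$-sentence $A$ with $U \vdash A$ we have $\mathcal M \models A$, hence $\mathcal M' \models A$, and since every model of $\alpha$ is isomorphic to $\mathcal M'$ this yields $\alpha \vdash A$; thus $\alpha \vdash U$, i.e.\ $U \dashv \alpha$, as required.

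I do not anticipate a serious obstacle. The one point deserving attention is that, in the finite-model case, $\alpha$ must entail the whole of $U$ and not merely the finitely many sentences one might display; this is secured uniformly by the observation that every model of $\alpha$ is isomorphic to a single fixed model of $U$, so $\alpha$ proves every $\Sigma_U$-consequence of $U$. Finiteness of $\Sigma_U$ is exactly what guarantees that the model description is a single first-order sentence.
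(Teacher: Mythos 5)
Your proof is correct and follows essentially the same strategy as the paper's: a two-case split in which the ``infinite'' case is discharged by Theorem~\ref{kcv} and the remaining case by an explicit finite model description. The only (harmless) difference is the pivot of the split --- the paper asks whether $U+{\sf INF}$ is consistent, applying Theorem~\ref{kcv} to $U+{\sf INF}$ in the positive case and otherwise taking the disjunction of the descriptions of all of $U$'s (boundedly many) finite models, which then axiomatizes $U$; you instead ask whether $U$ has a finite model and, if so, describe a single expansion of one such model, which works just as well since only $\alpha\vdash U$ and consistency are required.
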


\begin{proof}
Our theorem is a direct consequence of Theorem~\ref{grotesmurf}
applied to the relations $\dashv$ of $\sqsubseteq$ and $\dashv$ for $\preceq$ and $\bot$ as
initial example of $U \dashv \bot$. 

However, we can also reason as follows.
Suppose $U+ {\sf INF}$ is consistent. In that case we can apply Theorem~\ref{kcv} to $U+{\sf INT}$ to obtain the desired consistent
$\alpha$. If  $U+ {\sf INF}$ is inconsistent, $U$ clearly only has models of size $\leq n$, for some $n$. In this case,
$U$ can be axiomatized by the disjunction of the model descriptions of its finite models. We can now take this disjunction as our
$\alpha$.
\end{proof}

\section{The Main Theorem}
We formulate our main theorem.

\begin{theorem}\label{heiligesmurf}
Consider any recursively enumerable theory $U$ that is not finitely axiomatizable and any finite expansion $\Theta$ of the signature of $U$ with ${\sf M}_\Theta \geq 2$. 
Then, for all $U \dashv \alpha$, there
is a $\beta$ with $U \dashv \beta \jumpbneq \alpha$.
\end{theorem}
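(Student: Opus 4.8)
The plan is to reduce the statement to producing a single finitely axiomatized $\beta$ lying strictly between $U$ and $\alpha$, and to extract the hard half of strictness from a model-theoretic obstruction calibrated by the machinery of Section~2. First I would record two facts forced by the hypotheses. Since $\bot$ is finitely axiomatized, $U$ is consistent; and $U+{\sf INF}$ is consistent, for otherwise $U$ would prove a bound on the size of its models, would have only finitely many models up to isomorphism, and would therefore be finitely axiomatizable by the disjunction of their model descriptions, contrary to hypothesis. Next I would unwind the target: $\beta \jumpbneq \alpha$ asks for $\alpha \jump \beta$ together with $\beta \not\jump \alpha$. Since $\alpha \vdash \beta$ already yields $\alpha \jump \beta$ (via the identity translation), it suffices to build $\beta$ with $\alpha \vdash \beta \vdash U$ and $\beta \not\jump \alpha$: the two outer provabilities make $U \dashv \beta$ and $\alpha \jump \beta$ automatic, so the whole problem collapses to arranging $\beta \not\jump \alpha$.

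I would first dispose of the degenerate case in which $\alpha$ is inconsistent, i.e. $\alpha \dashv\vdash \bot$. Here $\alpha \jump \beta$ holds for every $\beta$, while $\beta \not\jump \alpha$ is equivalent to the consistency of $\beta$, since $\bot$ is interpretable only in an inconsistent theory. So it is enough to exhibit a consistent finitely axiomatized extension of $U$ in the expanded language, and applying Theorem~\ref{kcv} to the consistent, recursively enumerable, finite-model-free theory $U+{\sf INF}$ produces exactly such a $\beta$, conservative over $U+{\sf INF}$ and hence proving $U$.

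For the main case, $\alpha$ consistent, the plan is to manufacture $\beta$ as a deliberate weakening of $\alpha$ whose commitments are calibrated by a suitably chosen true $\Sigma^0_1$-sentence. Using the binary symbol of $\Theta$ I would, in the style of ${\sf C}_U$ and \sat, build into $\beta$ only as much internal structure as a chosen true $\Sigma^0_1$-sentence $S$ forces through the $[S]$/$\tinyset^\star$ construction, choosing $S$ (exploiting that $U$ is \emph{not} finitely axiomatizable, so that its content is genuinely unbounded) strictly below the interpretability strength of $\alpha$. Theorems~\ref{grappasmurf} and~\ref{hulpsmurf} are the tools here: the former reduces any interpretability claim with finitely axiomatized target to exhibiting a single translation, and the latter ties the existence of finite models of prescribed size to the truth of $S$ and thence to the interpretability of ``there are at least $n$ elements''. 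The intended effect is that $\alpha \vdash \beta$ while a minimal model of $\beta$, whose size and structure are pinned down by $S$, is too poor to host an internal model of $\alpha$.

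The main obstacle is precisely the verification of $\beta \not\jump \alpha$. By Theorem~\ref{grappasmurf} this means showing that for \emph{no} translation $\tau$ does $\beta \vdash \exists \vec w\, \alpha^\tau$, equivalently that some model of $\beta$ carries no parameter-definable internal model of $\alpha$. When $U$ has finite models the obstruction is clean: arrange $\beta$ to have a finite model while $\alpha$ has none, so that an interpretation of $\alpha$ inside that finite model would manufacture a finite model of $\alpha$. The genuinely hard case is when $U$, and hence $\alpha$ and every $\beta \vdash U$, has only infinite models; here no cardinality gap is available and the separation must come entirely from the $\Sigma^0_1$-calibration, i.e. from a $\beta$-model whose internal resources are provably insufficient to reconstruct the richer structure demanded by $\alpha$. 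I expect this step---producing and certifying such a minimal model of $\beta$ uniformly in the arbitrary given $\alpha$---to be where the real work lies, and it is plausibly where the general construction invoked in the proof of Theorem~\ref{tuinsmurf} does the heavy lifting.
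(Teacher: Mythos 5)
Your setup is sound: the reduction to finding $\beta$ with $\alpha \vdash \beta \vdash U$ and $\beta \not\jump \alpha$, and the observation that $U+{\sf INF}$ must be consistent, both match what the paper needs. But there is a genuine gap exactly where you locate it yourself: the verification of $\beta \not\jump \alpha$ is not a residual implementation detail, it is the entire theorem, and the strategy you sketch for it cannot work as stated. You propose to choose a true $\Sigma^0_1$-sentence $S$ ``strictly below the interpretability strength of $\alpha$'' and let the $[S]$-machinery calibrate $\beta$. No such choice is available in general: when $\alpha$ is, say, a conservative extension of $U$ (the case Krajewski actually asks about), $\alpha$ has no surplus strength over $U$ to get strictly below, yet $\beta$ must still prove all of $U$; and a $\beta$ named in advance by picking a true $S$ may interpret $\alpha$ for reasons invisible to any cardinality or strength calibration. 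By Theorem~\ref{grappasmurf} you must defeat \emph{every} translation $\tau$ uniformly in an arbitrary $\alpha$, and nothing in your sketch does that.

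The missing idea is self-reference. The paper obtains its $\gamma$ (respectively $A\wedge[K]$) from a G\"odel fixed point: $K$ is a $\Sigma^0_1$-sentence chosen so that $K$ is true if and only if the very theory built from $K$ interprets $\alpha$. One then shows that the truth of $K$ would make $U$ finitely axiomatizable --- via a finite subtheory plus ${\sf C}_U$ in one case, or via Theorem~\ref{hulpsmurf} and finite models of $\tinyset^\star$ in the other --- so $K$ is false; the falsity of $K$ simultaneously yields $\gamma \vdash U$ and $\gamma \not\jump \alpha$, and $\beta := \alpha\vee\gamma$ works. You also miss the case split the paper needs, namely whether some finitely axiomatized $U_0 \subseteq U$ satisfies $U_0 + {\sf INF}\vdash U$; this determines which of the two constructions (the ${\sf C}_U$/truth-predicate route or the $\tinyset^\star$/finite-model route) applies, and your single calibration-by-$S$ picture conflates them. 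Your degenerate case $\alpha\dashv\vdash\bot$ is handled correctly, but it is also subsumed by the general argument.
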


\noindent
We note that Theorem~\ref{heiligesmurf} is not a direct answer to Krajewski's question. However, we will show, in Corollary~\ref{goedesmurf},
that the negative answer to Krajewski's question is an immediate consequence of Theorem~\ref{heiligesmurf}.

\begin{proof}[Proof of Theorem~\ref{heiligesmurf}]
We first treat the case where $\Theta$ consists of a single binary relation symbol $R$. At the end of the proof we will describe how to adapt the argument
to the more general case.

\medent
Suppose $U$ is recursively enumerable and not finitely axiomatizable.
We split the proof in two cases:
\begin{enumerate}[A.]
\item
There is no finitely axiomatizable sub-theory $U_0$ of $U$ such that $U_0+{\sf INF} \vdash U$.
\item
There is a finitely axiomatizable sub-theory $U_0$ of $U$ such that $U_0+{\sf INF} \vdash U$.
\end{enumerate}

\medent
\emph{We treat case \textup(A\textup).} 
By Craig's trick, we can find a $\Delta_0(\omega_1)$ axiomatization ${\sf ax}_U$ of $U$. Let {\sf true} be the truth predicate derived from {\sat} of ${\sf C}_U$.

Suppose $U \dashv \alpha$.
We find a fixed point $K$ such that, in the standard model, we have $K$ iff $\gamma \jump \alpha$, where  $K = \exists x\, K_0(x)$ for $K_0 \in \Sigma_1^{\sf b}$ and
\[ \gamma :\iff {\sf C}_U \wedge \exists x\in N \, (K^N_0(x) \wedge \forall y <^N x\, ({\sf ax}^N_U(y) \to {\sf true}(x))).\]

 \noindent
Our first order of business is to prove that $K$ is false. 
 
 Suppose $K$ were true. Let it be witnessed by $k$. Then,
 we have ${\sf C}_U \vdash (K_0( \underline k))^N$. Using the commutation conditions it follows
 that: \[U+{\sf C}_U \vdash \forall y <^N \underline k\, ({\sf ax}^N_U(y) \to {\sf true}(x)).\] Thus,
 $U+{\sf C}_U \vdash \gamma$. By compactness, for some finite sub-theory $U_0$ of $U$, we have
 $U_0+{\sf C}_U \vdash \gamma$.
 
 Since $K$ is true we have $\gamma \jump \alpha$. Suppose we have $U \vdash B$, for any $U$-sentence $B$. Then, $\alpha \vdash B$, and thus,
 $\gamma \jump B$. Since $B$ is in the $U$-language, we find $\gamma \vdash B$. 
Since $B$ was an arbitrary consequence of $U$, we find $\gamma \vdash U$ and, hence, $U_0+{\sf C}_U \vdash U$.
 Since, every model  of $U_0+{\sf INF}$ can be expanded to a model
 of ${\sf C}_U$, we have $U_0+{\sf INF} \vdash U$, \emph{quod non}, by assumption (A).
 
 \medent
 We have shown that $\gamma \not \jump \alpha$. The falsity of $K$ also implies that $\gamma \vdash U$, since $\gamma$ knows of every standard
 number that it is not a witness of $K$.
 It follows that $U \dashv (\alpha \vee \gamma) \jumpb \alpha$ and $(\alpha \vee \gamma) \not \jump \alpha$.
 So,  $U \dashv (\alpha \vee \gamma) \jumpbneq \alpha$.
 Thus, we can take $\beta := (\alpha \vee \gamma)$.
 
 \medent
\emph{We treat case \textup(B\textup).}
We suppose, in order to obtain a contradiction, that $\alpha^\star$ is $\jumpb$-minimal such that $U \dashv \alpha^\star$.

Let $A$ be a sentence axiomatizing a finitely axiomatizable sub-theory $U_0$ of $U$ such that $U_0 +{\sf INF} \vdash U$.
 We find, using the G\"odel Fixed Point Lemma, a $\Sigma^0_1$-sentence $K$ such that, in the standard model,
$K$ iff $(A \wedge [K]) \jump \alpha^\star$. Here we use $R$ in the role of $\in$.

\medent
We claim that $K$ is false. Suppose $K$ is true. Then, we have a finite model of $\tinyset + \widetilde S^N$ of size, say, $n$.
 By Theorem~\ref{hulpsmurf}, we have:
\[ (A + \exists x_0\cdots \exists x_{n-1}\, \bigwedge_{i<j<n} x_i\neq x_j) \jump (A+[K]).\]
Since $(A \wedge [K]) \jump \alpha^\star$, we find:
\[(\dag)\;\;\;\; (A + \exists x_0\cdots \exists x_{n-1}\, \bigwedge_{i<j<n} x_i\neq x_j) \jump \alpha^\star.\] 
Let us write ${\sf D}_{\mathcal M}$ for the $U$-sentence describing the existence of a finite $\Sigma_U$-model $\mathcal M$.
We have: 
\[U+  \forall x_0\cdots\forall x_{n-1}\bigvee_{i<j<n} x_i = x_j \vdash  \bigvee\verz{{\sf D}_{\mathcal M} \mid \text{$\mathcal M$
 is a $U$-model of cardinality $<n$}} \] 
Since ${\sf D}_\mathcal M \vdash U$, it 
follows, by the $\jumpb$-minimality of $\alpha^\star$, that ${\sf D}_{\mathcal M} \jump \alpha^\star$. Ergo,
\[(\ddag)\;\;\;\; (U+  \forall x_0\cdots\forall x_{n-1}\bigvee_{i<j<n} x_i = x_j) \jump \alpha^\star.\]
We may conclude from (\dag) and (\ddag) that $U \jump \alpha^\star$.

We find that, for some finitely axiomatized $U_1\subseteq U$, we have $U_1 \jump \alpha^\star\vdash U$.
It follows that
$U_1 \vdash U$, in contradiction to the fact that $U$ is not finitely axiomatizable. 

Thus, $K$ is false.

\medent
Since $K$ is false, we have $(A \wedge [K]) \not \jump \alpha^\star$. 
Moreover, $A+[K] \vdash A + {\sf INF} \vdash U$.
It follows that $\beta := (\alpha^\star \vee (A \wedge [K]))$  is
$\jumpbneq$-below $\alpha^\star$ and $\dashv$-above $U$. A contradiction.
 
 \medent
 We can easily extend our proof to the general case where $\Theta$ is finite and ${\sf M}_\Theta \geq 2$.
 We simply choose one predicate symbol $P$ with arity $\geq 2$. We use the first two argument places of $P$ to simulate the
 argument places of $R$. The remaining argument places are treated as dummy variables. All other predicate symbols in $\Theta$ are don't care.
\end{proof}

\noindent
We can now answer Krajewski's question.

\begin{corollary}\label{goedesmurf}
Consider any theory $U$ that is not finitely axiomatizable and any finite expansion $\Theta$ of the signature of $U$ with ${\sf M}_\Theta \geq 2$. 
Then, for all $\alpha$ with $U \dashv_{\sf c} \alpha$, there
is a $\beta$ with $U \dashv_{\sf c} \beta \dashvneq \alpha$.
\end{corollary}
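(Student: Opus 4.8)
The plan is to derive Corollary~\ref{goedesmurf} directly from Theorem~\ref{heiligesmurf}, together with the transfer Fact stated just before Section~\ref{existence}. First I would dispose of the trivial case: if there is no $\alpha$ with $U\dashv_{\sf c}\alpha$, the universal statement holds vacuously, so fix one $\alpha$ with $U\dashv_{\sf c}\alpha$. Since $\alpha$ is a single sentence, the set $\verz{A \text{ a } \Sigma_U\text{-sentence} : \alpha\vdash A}$ is recursively enumerable, and by conservativity it coincides with the set of $U$-theorems, exactly as noted after Theorem~\ref{kcv}. Hence $U$ is recursively enumerable and, by hypothesis, not finitely axiomatizable, so Theorem~\ref{heiligesmurf} applies. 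As $U\dashv_{\sf c}\alpha$ entails $U\dashv\alpha$, the theorem produces a $\beta$ with $U\dashv\beta\jumpbneq\alpha$; unpacking, $U\dashv\beta$, $\alpha\jump\beta$, and $\beta\not\jump\alpha$.

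Next I would extract the two halves of the desired relation $\beta\dashvneq\alpha$, i.e. $\alpha\vdash\beta$ and $\beta\nvdash\alpha$. The non-derivability $\beta\nvdash\alpha$ is immediate from $\beta\not\jump\alpha$, because provability is a special case of interpretability (the identity translation witnesses $\beta\jump\alpha$ whenever $\beta\vdash\alpha$). The derivability $\alpha\vdash\beta$ is the one place where the bare statement of Theorem~\ref{heiligesmurf} delivers only the weaker $\alpha\jump\beta$, and closing this gap is the one genuine obstacle. I would resolve it in either of two equivalent ways: observe that the $\beta$ built in the proof of Theorem~\ref{heiligesmurf} is literally a disjunction with $\alpha$ as a disjunct (namely $\alpha\vee\gamma$ in case (A) and $\alpha^\star\vee(A\wedge[K])$ in case (B)), so that $\alpha\vdash\beta$ holds outright; or, using the theorem purely as a black box, replace $\beta$ by $\alpha\vee\beta$. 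The replacement preserves all needed properties: $\alpha\vdash\alpha\vee\beta$ trivially; $\alpha\vee\beta\vdash U$ since both $\alpha\vdash U$ and $\beta\vdash U$; $\alpha\jump(\alpha\vee\beta)$ since $\alpha\vdash\alpha\vee\beta$; and $\alpha\vee\beta\not\jump\alpha$, because $\beta\vdash\alpha\vee\beta$ gives $\beta\jump(\alpha\vee\beta)$, so $(\alpha\vee\beta)\jump\alpha$ would force $\beta\jump\alpha$ by composition, contradicting $\beta\not\jump\alpha$.

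Finally I would upgrade the mere extension to a conservative one by invoking the transfer Fact with $\sqsubseteq := \dashv_{\sf c}$ and $\preceq := \jumpb$: from $U\dashv_{\sf c}\alpha$ and $U\dashv\beta\jumpb\alpha$ (the latter being $\alpha\jump\beta$) it concludes $U\dashv_{\sf c}\beta$. Assembling the pieces gives $U\dashv_{\sf c}\beta$ together with $\alpha\vdash\beta$ and $\beta\nvdash\alpha$, that is $U\dashv_{\sf c}\beta\dashvneq\alpha$, which is precisely the negative answer to Krajewski's question. The only subtlety to watch is the interpretability-to-provability bridge discussed above; the $\alpha\vee\beta$ device (or a glance at the explicit disjunctive shape of $\beta$) handles it cleanly, and every remaining step is a routine use of the Fact and of the elementary implication that $\vdash$ refines $\jump$.
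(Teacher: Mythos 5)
Your proposal is correct and follows essentially the same route as the paper: apply Theorem~\ref{heiligesmurf}, pass to the disjunction $\alpha\vee\gamma$ to upgrade $\alpha\jump\gamma$ to $\alpha\vdash\beta$, and recover conservativity of $\beta$ from that of $\alpha$. The only cosmetic difference is that you rule out $\beta\vdash\alpha$ by composing interpretations, where the paper argues directly that $\alpha\vee\gamma\vdash\alpha$ would give $\gamma\vdash\alpha$ and hence $\gamma\jump\alpha$; both are fine.
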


\noindent
We note that, unlike in the case of Theorem~\ref{heiligesmurf}, we do not need to demand that $U$ is recursively enumerable. 
If the theory has a finite conservative extension at all, then the theory is
automatically recursively enumerable.

\begin{proof}
Suppose $U$ that is not finitely axiomatizable. Let $\Theta$ be an expansion of the signature of $U$ with ${\sf M}_\Theta \geq 2$. 
Suppose $U \dashv_{\sf c} \alpha$. It follows that $U$ is recursively enumerable and $U \dashv\alpha$, and, hence, by Theorem~\ref{heiligesmurf}, there
is a $\gamma$ with $U \dashv \gamma \jumpbneq \alpha$. Let $\beta := (\alpha \vee \gamma)$. Clearly, $U \dashv \beta \dashv \alpha$.
Suppose that we would have $\beta \vdash \alpha$. Then, it would follow that $\gamma \vdash \alpha$. From this, we get $\gamma \jump \alpha$.
Quod non. Hence $\beta \dashvneq \alpha$.   Finally, since a sub-theory of a conservative theory is conservative, we have $U \dashv_{\sf c} \beta$.
\end{proof}

\noindent
In fact we can do more than Corollary~\ref{goedesmurf}. We can replace $\dashv$ in Theorem~\ref{heiligesmurf} by any of $\dashv_{\sf c}$ or $\dashv_{\sf mc}$ and
we can replace $\jumpb$ by any of $\dashv$ or $\jumpb_{\sf pf}$. In all these cases, we obtain a valid theorem. 
The reasoning for the seven further cases is fully analogous to the reasoning for Theorem~\ref{goedesmurf}.
We spell this out in Appendix~\ref{ninecases}. 

\medent
We end this Section with some questions.

\begin{question}\label{Q3}
Consider any theory $U$ that is not finitely axiomatizable and any finite expansion $\Theta$ of the signature of $U$ with ${\sf M}_\Theta \geq 2$. 

Suppose $\alpha$ is not interpretable (in the full sense of interpretability) in $U$. Is there an extension $\beta$ of $U$, such that $\alpha$ is not interpretable in $\beta$?
\end{question}

\begin{question}\label{Q4}
Consider any theory $U$ that is not finitely axiomatizable and any finite expansion $\Theta$ of the signature of $U$ with ${\sf M}_\Theta \geq 2$. 

We take as the length of a proof the number of symbols in the proof written in a fixed finite alphabet. 
We define $\beta \preceq_{\sf sp} \alpha$ iff there is a polynomial $P(x)$ such that, for every $A$ (of the language of $U$),
 if $A$ is provable from $\beta$ by a proof of the length $n$, then $A$ is provable from $\alpha$ by a proof of  length $\leq P(n)$.
 
Can there be a $ \preceq_{\sf sp}$-minimal conservative extension $\alpha^\star$ of $U$? 
\end{question}

\section{The Case of Unary Predicates}\label{unapred}
In this section, we provide an example of a class of theories $U$ such that, in the language
expanded with a non-empty finite signature of unary predicate symbols, we have a $\jumpb_{\sf pf}$-minimal $\alpha^\star$
such that $U \dashv_{\sf mc} \alpha^\star$.

\medent
Let {\sf LIN} be the theory of linear order. 
Suppose $\Theta$ is a finite signature consisting of unary predicate symbols
and let $P$ be a designated symbol in $\Theta$. We take
$\Lambda$ to be the signature of the theory of order extended with $\Theta$. 
Let ${\sf UB}(P)$  be the property of $P$ that $P$ defines a non-empty set that
either does not have a minimal element or does not have a maximal element. More formally:
\begin{itemize}
\item
 ${\sf UB}(P) :\iff \exists x\, P(x) \; \wedge \\
 \hspace*{1cm} (\forall y\, (P(y) \to \exists z\, (P(z) \wedge y <z)) \vee 
\forall y\, (P(y) \to \exists z\, (P(z) \wedge z < y)))$.
 \end{itemize}
 
 \medent
The main theorem of this section is as follows.
\begin{theorem}\label{nerdsmurf}
Suppose $\Theta$ is a non-empty finite signature consisting of unary predicate symbols
and let $P$ be in $\Theta$. 
Suppose $A$ is a finite extension of {\sf LIN} and let 
$U:=  A + {\sf INF}$. We take $\alpha^\star := (A\wedge {\sf UB}(P))$.
We have:
\begin{enumerate}[a.]
\item
 $U\dashv_{\sf mc} \alpha^\star$.
 \item
For all $\alpha$ such that $U \dashv \alpha$, we have $\alpha^\star \jumpb_{\sf pf} \alpha$.
\end{enumerate}
\end{theorem}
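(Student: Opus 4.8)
\emph{Plan for part (a).} The goal is to show that every model $\mathcal M$ of $U = A + {\sf INF}$ can be expanded to a model of $\alpha^\star = A \wedge {\sf UB}(P)$. Since $\mathcal M \models A$ already and $P$ does not occur in $A$, I only need to choose an interpretation of $P$ (and the other symbols of $\Theta$, which I may set arbitrarily, say empty) so that ${\sf UB}(P)$ holds. Because $\mathcal M \models {\sf INF}$, the underlying linear order is infinite, so it must contain either an infinite ascending chain or an infinite descending chain. Concretely, an infinite linear order always has a non-empty definable-or-chosen subset that is unbounded in one direction: for instance, pick any element $a$ and consider $\{x : x \geq a\}$. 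If this set has no maximum, interpret $P$ as this set and the first disjunct of ${\sf UB}(P)$ holds; otherwise the order below every point is infinite and $\{x : x \le a\}$ has no minimum, so I take that set for the second disjunct. A slightly cleaner route is a compactness/Ramsey-type observation that an infinite linear order has an infinite subset order-isomorphic to $\omega$ or $\omega^\star$; either way I obtain a witness for ${\sf UB}(P)$. Thus $\mathcal M$ expands to a model of $\alpha^\star$, giving $U \dashv_{\sf mc} \alpha^\star$.

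\emph{Plan for part (b).} Here I must show that whenever $U \dashv \alpha$, there is a parameter-free interpretation $\mathfrak t : \alpha \to \alpha^\star$ that identically preserves the symbols of $U$. The idea is that $\alpha^\star$ is just barely strong enough to express the infinitude of the order via the unbounded predicate $P$, and this is the ``one finite way'' advertised in the introduction. Since $\alpha^\star \vdash A \wedge {\sf INF}$ (an unbounded non-empty set forces the order to be infinite) and $A + {\sf INF} \vdash U \dashv \alpha$, I know $\alpha^\star \vdash U$-consequences, but to interpret $\alpha$ I need to reconstruct, inside a model of $\alpha^\star$, a model of $\alpha$ whose $U$-reduct is the given one. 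The natural strategy is to use the unbounded set $P$ to carve out a cofinal (or coinitial) copy of the order on which I can realize whatever $\Theta$-structure $\alpha$ demands, while keeping the $U$-part fixed by the identity translation. Because the translation must be identical on $\Sigma_U$, the domain should be the whole model (parameter-free, $p=0$), and I define the new interpretations of the symbols of $\Theta$ by formulas of the $\Lambda$-language that use $P$ and $<$.

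\emph{The main obstacle.} The delicate point is (b): I must interpret an \emph{arbitrary} extension $\alpha$ of $U$ in the expanded language into the single fixed theory $\alpha^\star$, keeping $\Sigma_U$ pointwise fixed. The leverage must come from a structural classification of finitely axiomatized extensions of ${\sf LIN}$ with unary predicates — presumably a quantifier-elimination or Ehrenfeucht–Fra\"iss\'e argument showing that such an $\alpha$, being consistent with arbitrarily large finite models and with $U$, cannot express more about the order than ``it is infinite,'' so that its $\Theta$-requirements can always be met inside any model possessing an unbounded $P$. I expect the hard work to lie in proving this expressive-weakness lemma — that over an infinite linear order the first-order theory with finitely many unary predicates is controlled by a bounded amount of data (the EF-game over unary-colored orders has bounded quantifier rank behavior) — and then packaging it as the required parameter-free interpretation $\alpha^\star \jumpb_{\sf pf} \alpha$. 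Once that classification is in hand, exhibiting the explicit translation $\tau$ and verifying, via Theorem~\ref{grappasmurf}, that $\alpha^\star \vdash \exists \vec w\, \alpha^\tau$ (with $\vec w$ empty) should be routine.
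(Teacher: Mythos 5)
Your part (a) is essentially the paper's argument: an infinite linear order cannot be both well-founded and converse well-founded, so $P$ can be chosen as a set with no least or no greatest element. (Your first concrete recipe, fixing one $a$ and testing $\verz{x : x\geq a}$ and $\verz{x: x\leq a}$, does not quite work --- in $\omega+1$ with $a$ interior both sets are bounded in the relevant direction --- but your fallback via an embedded copy of $\omega$ or $\omega^\star$ is correct.)

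Part (b) has a fatal direction error. In the paper's conventions an interpretation $\mathfrak t : V \to W$ witnesses $W \jump V$, so $\alpha^\star \jumpb_{\sf pf} \alpha$ means $\alpha \jump_{\sf pf} \alpha^\star$: it is $\alpha$ that must interpret $\alpha^\star$. Since the translation is the identity on $<$ and does not relativize quantifiers, this just means that $\alpha$ must prove ${\sf UB}(\psi)$ for some $\Lambda$-definable unary $\psi$; that is the precise sense in which $\alpha^\star$ is \emph{minimal} and ``there is only one finite way to say the order is infinite.'' You instead set out to interpret an arbitrary $\alpha$ \emph{inside} $\alpha^\star$, and that claim is false: already for $A = {\sf LIN}$, the sentence $\alpha := {\sf LIN}\wedge {\sf UB}(P) \wedge \exists x\,\forall y\, y\leq x$ extends $U$ (an ${\sf UB}$-witness forces infinitude), yet $\alpha^\star$ has models without a maximum, and a translation that fixes $<$ and the domain cannot create one; the inconsistent $\alpha=\bot$ is an even blunter counterexample. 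So the entire plan for (b) --- carving a cofinal copy out of $P$, the ``expressive-weakness lemma'' letting $\alpha^\star$ realize $\alpha$'s $\Theta$-requirements --- is aimed at the wrong target. The actual proof runs as follows: if $\alpha \not\jump_{\sf pf} {\sf UB}(P)$, then (using compactness plus the if-then-else combination $\phi_0\tupel{{\sf UB}(\phi_0)}(\phi_1\tupel{{\sf UB}(\phi_1)}(\ldots))$ of finitely many candidates) the theory $\alpha + \verz{\neg\,{\sf UB}(\phi) \mid \phi \in {\sf Form}^1_\Lambda}$ is consistent, giving a model of $\alpha$ in which every non-empty definable set has both a maximum and a minimum; such a model satisfies the auxiliary theory $\valf{\alpha}$, and Theorem~\ref{koksmurf} (proved via the splitting decomposition of Theorem~\ref{basicsmurf} and an interval-deletion surgery on models) shows that consistency of $\valf{\alpha}$ forces $\alpha$ to have a finite model, contradicting $\alpha \vdash {\sf INF}$. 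None of this machinery, which is where the real work lies, appears in your proposal.
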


\noindent
So, in a sense, the theorem tells us that, in the extended language, there is only one finite way to say that 
we exclude finite models of $A$. We note that Theorem~\ref{nerdsmurf} with $\jumpb_{\sf pf}$ replaced by
$\jumpb$ follows from  Theorem~\ref{nerdsmurf}. However the following is open.

\begin{question}\label{Q5}
Is there an example of a theory $U$ and an $\alpha$ in the language of $U$ extended with a non-empty finite signature of unary predicate
symbols, such that $\alpha$ is a $\dashv$-minimal conservative extension of $U$?
\end{question}

\noindent
We prove (a) of Theorem~\ref{nerdsmurf} now and postpone the proof of (b) until we have done
some preparatory work.

\begin{proof}[Proof of Theorem~\ref{nerdsmurf}\textup(a\textup)]
Consider any infinite  model $\mathcal N$ of $A$. 
Since $\mathcal N$ cannot be both well-founded and converse well-founded,
we can find the desired interpretation of $P$.
\end{proof}

\noindent
We start our prepratory work with a theorem on linear orderings. Consider the theory {\sf LIN} of linear order. We extend the signature of {\sf LIN}. 
 with a  signature $\Theta$ consisting of finitely many unary predicate symbols. Say the resulting signature is $\Lambda$.
 As usual, we let $\alpha,\beta,\ldots$ range over $\Lambda$-language
We add two two unary predicates $\apr_0$ and $\apr_1$ to the $\Lambda$-language.
We write $\alpha^{\apr_i}$ for the result of relativizing all quantifiers in $\alpha$ to 
$\apr_i$. 
We add the following axioms to {\sf LIN}:
\begin{itemize}
\item
$\forall x\, (\apr_0(x) \iff \neg\, \apr_1(x))$.
\item
$\forall x\,\forall y\, ((\apr_0(x) \wedge y<x) \to \apr_0(y))$.
\end{itemize}
Note that we allow the $\apr_i$ to be empty. Relativization to the empty domain is as expected: an existential sentence relativized to the empty domain is false and
a universal one is true.

Say the resulting theory (in the language of signature $\Lambda(\apr_0,\apr_1)$) is ${\sf LIN}^{\sf split}$. Consider a formula $\alpha$. Let $\vec x;\vec y$ be a partition in two
parts of a finite set of variables. Let the context ${\sf C}(\vec x;\vec y)$ be a conjunction of all formulas $\apr_0(x_i)$ and $\apr_1(y_j)$.  

\begin{theorem}\label{basicsmurf}
Consider a formula $\alpha(\vec x,\vec y)$ with all free variables shown. Then, over ${\sf LIN}^{\sf split}+{\sf C}(\vec x;\vec y)$, the formula
 $\alpha(\vec x,\vec y)$ is equivalent to a boolean combination of formulas of the form $\eta^{\apr_0}(\vec x)$ \textup(all free variables shown\textup) and
 $\theta^{\apr_1}(\vec y)$ \textup(all free variables shown\textup).
\end{theorem}

\begin{proof}
The proof is by induction on $\alpha$. For the atomic case, we have:
\begin{itemize}
\item
${\sf LIN}^{\sf split} + \apr_0(x)\wedge \apr_1(y) \vdash x< y \iff \top$.
\item 
${\sf LIN}^{\sf split} + \apr_0(x)\wedge \apr_1(y) \vdash y< x \iff \bot$.
\item
${\sf LIN}^{\sf split} + \apr_0(x_0)\wedge \apr_0(x_1) \vdash x_0< x_1 \iff x_0 < x_1$.
\item
${\sf LIN}^{\sf split} + \apr_1(y_0)\wedge \apr_1(y_1) \vdash y_0< y_1 \iff y_0 < y_1$.
\item
${\sf LIN}^{\sf split} + \apr_0(x) \vdash P(x) \iff P(x)$.
\item
${\sf LIN}^{\sf split} + \apr_1(y) \vdash P(y) \iff P(y)$.
\item
Similary, for further unary predicates.
\end{itemize}

\noindent
Preservation of the desired property under the propositional connectives is trivial. We treat the case of the existential quantifier.
Suppose $\alpha = \exists u\, \alpha_0(u,\vec x,\vec y)$. After some rewriting we have:
\begin{itemize}
\item
${\sf LIN}^{\sf split} + {\sf C}(u,\vec x;\vec y) \vdash \alpha_0(u,\vec x,\vec y) \iff \bigvee_{i<n}(\eta^{\apr_0}_{i}(u,\vec x) \wedge \theta_i^{\apr_1}(\vec y)) $.
\item
${\sf LIN}^{\sf split} + {\sf C}(\vec x;u,\vec y) \vdash \alpha_0(u,\vec x,\vec y) \iff \bigvee_{j<m}(\kappa^{\apr_0}_{i}(\vec x) \wedge \nu_i^{\apr_1}(u,\vec y)) $.
\end{itemize}
So, we have:
{
\begin{eqnarray*}
{\sf LIN}^{\sf split} + {\sf C}(\vec x;\vec y) \vdash \exists u\, \alpha_0(u,\vec x,\vec y) & \iff & 
\exists u\, (\apr_0(u) \wedge \alpha_0(u,\vec x,\vec y)) \;\vee \\
&& \exists u\, (\apr_1(u) \wedge \alpha_0(u,\vec x,\vec y)) \\
& \iff & \exists u\, (\apr_0(u) \wedge \bigvee_{i<n}(\eta^{\apr_0}_{i}(u,\vec x) \wedge \theta_i^{\apr_1}(\vec y))) \vee \\
&& \exists u\, (\apr_1(u) \wedge\bigvee_{j<m}(\kappa^{\apr_0}_{i}(\vec x) \wedge \nu_i^{\apr_1}(u,\vec y))) \\
& \iff &  \bigvee_{i<n}( \exists u\, (\apr_0(u) \wedge \eta^{\apr_0}_{i}(u,\vec x)) \wedge \theta_i^{\apr_1}(\vec y)) \;\vee \\
&& \bigvee_{j<m}(\kappa^{\apr_0}_{i}(\vec x) \wedge  \exists u\, (\apr_1(u) \wedge \nu_i^{\apr_1}(u,\vec y))) \\
& \iff &  \bigvee_{i<n}(   (\exists u\,\eta_{i}(u,\vec x))^{\apr_0} \wedge \theta_i^{\apr_1}(\vec y)) \;\vee \\
&& \bigvee_{j<m}(\kappa^{\apr_0}_{i}(\vec x) \wedge (\exists u\, \nu_i (u,\vec y))^{\apr_1}) 
\end{eqnarray*}
}
So, we are done. (We note that the calculation also works when one of the domains is empty.)
\end{proof}

\noindent
Let $\alpha$ be a sentence of signature $\Lambda$ with ${\sf LIN} \dashv \alpha$. 
Let $\eta_i$, for $i<n$ and $\theta_j$, for $j < \ell$, be the sentences produced for $\alpha$ in Theorem~\ref{nerdsmurf}.
 Let $s$ be a $0,1$-sequence of of length $\ell$. We write $y \in (x,\infty)$ for $x<y$.
We define:
\begin{itemize}
\item
$\beta^s(x)$ is the conjunction of the sentences $\theta_j^{(x,\infty)}$ when $s_j=1$ and \\ $\neg\, \theta^{(x,\infty)}_j$ if $s_j=0$.  
\end{itemize} 

\noindent
We will say that \emph{$x$ witnesses $s$} for $\beta_s(x)$. We note that each $x$ witnesses a unique $s$.
We define the theory $\valf{\alpha}$ as follows.

\begin{enumerate}[$\valf{\alpha}$1.]
\item 
$\alpha$
\item 
$\exists x\, \forall y\, x\leq y$ (Zero)
\item
$\forall x\, \forall y\, (x< y \to \exists z\, (x <z \wedge \forall u\, (x < u \to z \leq u)))$ (Restricted Successor)
\item
$\exists x \, \beta^s(x) \to \exists x\, (\beta^s(x) \wedge \forall y\, (x< y \to \neg\, \beta^s(y)))$, for each $s:\ell \to 2$.\\
(If $s$ has a witness at all, it has a largest witness.)
\end{enumerate}

\begin{theorem}\label{koksmurf}
Let $\alpha$ be a sentence in the language of linear orderings expanded with finitely many unary
predicate symbols.
Suppose ${\sf LIN} \dashv \alpha$. Then,
$\alpha$ has a finite model iff  $\valf{\alpha}$ is consistent.
\end{theorem}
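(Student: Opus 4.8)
The plan is to prove the two directions separately, the \emph{only if} direction being routine and the \emph{if} direction carrying all the content.

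For the \emph{only if} direction, suppose $\alpha$ has a finite model $\mathcal M$. Since $\alpha\vdash{\sf LIN}$, $\mathcal M$ is a finite linear order carrying the extra unary predicates, and I claim $\mathcal M\models\valf{\alpha}$: axiom $\valf{\alpha}1$ is $\alpha$ itself; a nonempty finite linear order has a least element ($\valf{\alpha}2$) and every non-maximal element has an immediate successor ($\valf{\alpha}3$); and for any $s$ the set $\{x:\beta^s(x)\}$ is finite, hence has a largest element if nonempty, giving $\valf{\alpha}4$. So $\valf{\alpha}$ has a model and is consistent.

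For the \emph{if} direction, fix $\mathcal M\models\valf{\alpha}$; I will manufacture a finite model of $\alpha$, and may assume $\mathcal M$ is infinite. For $x\in\mathcal M$ write $s(x)$ for the unique $s:\ell\to 2$ with $\mathcal M\models\beta^s(x)$ (the \emph{tail-pattern} of $x$, recording which $\theta_j$ hold in $(x,\infty)$). Two ingredients drive the argument. First, a one-cut composition/deletion principle read off from the decomposition produced for $\alpha$ in Theorem~\ref{basicsmurf}: since over ${\sf LIN}^{\sf split}$ the sentence $\alpha$ is equivalent to a Boolean combination of the $\eta_i^{\apr_0}$ and $\theta_j^{\apr_1}$, the truth of $\alpha$ across any cut of a linear order depends only on the $\eta$-pattern of the initial segment and the $\theta$-pattern of the final segment; consequently, whenever $x<y$ with $s(x)=s(y)$, the substructure on $\{z:z\le x\}\cup\{z:z>y\}$ (i.e.\ $\mathcal M$ with the interval $(x,y]$ deleted) again models $\alpha$, because the retained initial segment $[0,x]$ and the retained final segment $(y,\infty)$ carry, respectively, the same $\eta$- and $\theta$-patterns as $[0,x]$ and $(x,\infty)$. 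Second, $\valf{\alpha}4$ gives each realized pattern $s$ a largest witness $\ell_s$; as every element witnesses its own pattern and there are at most $2^\ell$ patterns, each $x$ satisfies $x\le\ell_{s(x)}\le\max_s\ell_s$, so $\mathcal M$ has a greatest element $x^\ast$.

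The heart of the proof is a collapse turning $\mathcal M$ into a finite model of $\alpha$ in at most $2^\ell$ deletions. I maintain a decomposition $\mathcal M_t=F_t\cup T_t$ with $F_t$ a finite initial block and $T_t=(p_t,\infty)$ a final segment of the original $\mathcal M$ (so tails of elements of $T_t$, hence their patterns $s$, are unchanged), starting from $T_0=\mathcal M$. While $\mathcal M_t$ is infinite, $T_t$ is an infinite final segment with a least element and greatest element $x^\ast$; walking up by successors from $\min T_t$ using $\valf{\alpha}3$, the chain either reaches $x^\ast$ and exhausts $T_t$ (making it finite, contradiction) or is an $\omega$-chain with limit $z_t\in T_t$, below which $T_t$ is discrete by $\valf{\alpha}2,3$. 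Some pattern $c$ occurs cofinally below $z_t$; its largest witness satisfies $\ell_c\ge z_t$, and picking a witness $x<z_t$ of $c$, deletion of $(x,\ell_c]$ (legitimate as $s(x)=c=s(\ell_c)$) removes $z_t$ and the whole $\omega$-chain, giving $\mathcal M_{t+1}=F_{t+1}\cup(\ell_c,\infty)\models\alpha$ with $F_{t+1}$ still finite. The decisive point is that the set of patterns realized in the final segment strictly shrinks: $c$ is realized in $T_t$ but, $\ell_c$ being its global largest witness, is realized nowhere in $T_{t+1}=(\ell_c,\infty)$, while $T_{t+1}\subseteq T_t$ realizes no new pattern. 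Hence the process halts after at most $2^\ell$ steps, and only with $\mathcal M_t$ finite, producing the desired finite model of $\alpha$. I expect the only genuine difficulty to be exactly this termination bookkeeping: limit points may accumulate, so one cannot peel them off singly, and the right move is precisely to let $\valf{\alpha}4$ supply, above each limit, a matching endpoint $\ell_c$ so that one deletion swallows an entire infinite block, converting a naive transfinite peeling into a bounded induction measured by the number of surviving tail-patterns.
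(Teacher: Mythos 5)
Your proof is correct and rests on the same two pillars as the paper's: the decomposition of $\alpha$ into $\eta_i^{\apr_0}$- and $\theta_j^{\apr_1}$-components from Theorem~\ref{basicsmurf}, and the resulting deletion move --- if $x<y$ have the same tail-pattern, excising $(x,y]$ preserves $\alpha$ --- combined with ($\valf{\alpha}4$) to push the upper endpoint of the excised interval all the way to the \emph{global} largest witness of the pattern, so that the pattern dies out above the cut. Where you genuinely diverge is in the termination bookkeeping. The paper picks, among all models of $\valf{\alpha}$, one minimizing the number of patterns witnessed infinitely often, performs a single deletion, and derives a contradiction; this forces it to verify that the pruned model still satisfies all of ($\valf{\alpha}1$)--($\valf{\alpha}4$), since the pruned model must again be eligible for the minimization. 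You instead iterate on a single model and measure progress by the set of patterns realized in the surviving final segment $T_t$, which loses $c$ at each step; this yields an explicit bound of $2^\ell$ deletions and, more pleasantly, lets you check only that $\alpha$ (not the auxiliary axioms) survives each deletion, because ($\valf{\alpha}2$)--($\valf{\alpha}4$) are only ever applied inside the original $\mathcal M$ via your invariant that $T_t$ is a final segment of $\mathcal M$. Your observation that ($\valf{\alpha}4$) already forces $\mathcal M$ to have a greatest element is a nice extra that the paper does not use.

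One small repair: the supremum $z_t$ of the successor chain issuing from $\min T_t$ need not exist in a linear order (think of $\omega+\omega^\ast$), so the dichotomy ``reaches $x^\ast$ or has a limit $z_t\in T_t$'' is not literally available. Nothing is lost: you only use that some pattern $c$ recurs cofinally \emph{in the chain} and that its global largest witness $\ell_c$ therefore lies strictly above every element of the chain; phrase the step that way and drop $z_t$ altogether.
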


\begin{proof}
The left-to-right direction is immediate. 

\medent
We prove right-to-left.
Suppose $\valf{\alpha}$ is consistent. Then, $\valf{\alpha}$ has a model $\mathcal N$.
We write $\mathfrak n(\mathcal N)$ for the number of $s$ that are witnessed infinitely
often in $\mathcal N$.
Let $\mathcal M$ be a model of $\valf{\alpha}$  such that $\mathfrak n(\mathcal M)$ is minimal. Suppose $\mathcal M$ 
is infinite.

\medent
The model $\mathcal M$ begins with a copy of $\omega$. By the pigeon-hole principle, there is an $s^\ast$ that is witnessed infinitely often in
this copy of $\omega$. 
Clearly, the initial copy of $\omega$ contains a smallest witness $a$ of $s^\ast$. Let $b$ be the maximal witness of $s^\ast$.

We now remove the interval $\opcl{a,b}$ from $\mathcal M$, thus obtaining a new model $\mathcal M'$. 

\medent
We claim that $\mathcal M'$ again satisfies $\valf{\alpha}$.

To prove ($\valf{\alpha}1$), we note that:
$\mathcal M \models \eta_i^{[0,a]}$ iff $\mathcal M' \models \eta_i^{[0,a]}$. Moreover, we have
$\mathcal M \models \theta_j^{(a,\infty)}$ iff $\mathcal M \models \theta_j^{(b,\infty)}$, and 
$\mathcal M \models \theta_j^{(b,\infty)}$ iff $\mathcal M' \models \theta_j^{(a,\infty)}$.
So, $\mathcal M \models \theta_j^{(a,\infty)}$ iff $\mathcal M' \models \theta_j^{(a,\infty)}$.
Hence, $\mathcal M' \models \alpha$. 

The preservation of ($\valf{\alpha}2$) and ($\valf{\alpha}3$) is immediate.

Finally, consider any $s$ that is witnessed in $\mathcal M'$.
In case $s$ only has $\mathcal M'$-witnesses in $[0,a]$ we are done, since $[0,a]$ is finite. In case $s$ has an $\mathcal M'$-witness in
$(a,\infty)$, then it has an $\mathcal M$-witness in
$(a,\infty)$, since the question whether $c$ is a witness only depends on what happens above $c$.
It follows that $s$ has a maximal $\mathcal M$-witness in $(a,\infty)$, and, hence, $s$ has a maximal
$\mathcal M'$-witness in $(a,\infty)$. This gives us ($\valf{\alpha}4$).

\medent
We note that if $s$ is witnessed infinitely often in $\mathcal M'$, then it  witnessed infinitely often in $\mathcal M$.
On the other hand, $s^\ast$ is not witnessed in $(b,\infty)$, so it is only witnessed finitely often
in $\mathcal M'$. Thus, $\mathfrak n(\mathcal M') < \mathfrak n(\mathcal M)$. This contradicts the minimality of
 $\mathfrak n(\mathcal M)$. 
 
 We may conclude that $\mathcal M$ must be finite. 
\end{proof}

\noindent 
Since, Theorem~\ref{koksmurf} tells us that the property of $\alpha$ having a finite model is both recursively enumerable and
co-recursively enumerable, we have the following corollary.

\begin{corollary}\label{beslissmurf}
Suppose $\alpha$ is a sentence in the language of {\sf LIN} expanded with finitely many
unary predicate symbols and ${\sf LIN} \dashv\alpha$. Then, it is decidable whether
$\alpha$ has a finite model.
\end{corollary}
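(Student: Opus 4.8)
The plan is to derive decidability from semi-decidability in both directions, via Post's theorem. First I would observe that ``$\alpha$ has a finite model'' is recursively enumerable outright: one effectively enumerates all finite $\Lambda$-structures up to isomorphism---say by listing domains $\{0,\dots,k-1\}$ together with every choice of the order relation and of the unary predicates of $\Theta$---and for each such $\mathcal M$ decides whether $\mathcal M \models \alpha$, which is a decidable question on a finite structure. This search halts exactly when $\alpha$ has a finite model, so the finite model property is $\Sigma^0_1$.

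For the opposite direction I would invoke Theorem~\ref{koksmurf}, which under the standing hypothesis ${\sf LIN} \dashv \alpha$ gives the equivalence ``$\alpha$ has a finite model iff $\valf{\alpha}$ is consistent'', and hence ``$\alpha$ has \emph{no} finite model iff $\valf{\alpha}$ is inconsistent''. The point to verify is that $\valf{\alpha}$ is effectively computable from $\alpha$: the sentences $\theta_j$ occurring in the axiom scheme ($\valf{\alpha}4$) are precisely the $\apr_1$-relativized constituents delivered by the normal form of Theorem~\ref{basicsmurf}, and that normal form is produced by a recursion on the structure of $\alpha$, so the finite list of $\theta_j$, the finitely many formulas $\beta^s$, and therefore the finitely many axioms comprising $\valf{\alpha}$ can all be written down algorithmically. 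Consequently ``$\valf{\alpha}$ is inconsistent'' is recursively enumerable by searching for a derivation of $\bot$ from this finite, effectively given axiomatization. Thus ``$\alpha$ has no finite model'' is $\Sigma^0_1$, i.e.\ the finite model property is $\Pi^0_1$.

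Having a semi-decision procedure for the property and one for its complement, I would conclude by running the two searches in parallel: for any admissible $\alpha$ exactly one of them terminates, and which one does tells us whether $\alpha$ has a finite model. This is Post's theorem---a set that is both $\Sigma^0_1$ and $\Pi^0_1$ is recursive. Nothing here is deep; the only genuine work is the effectiveness claim of the second paragraph. I expect the main obstacle to be exactly that bookkeeping: confirming that the induction underlying Theorem~\ref{basicsmurf} yields a \emph{uniform algorithm} computing the normal form, rather than a mere existence statement, so that the map $\alpha \mapsto \valf{\alpha}$ is genuinely recursive. (Note that the hypothesis ${\sf LIN} \dashv \alpha$ is assumed as part of the input and need not itself be decided.)
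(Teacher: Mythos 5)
Your proposal is correct and is exactly the argument the paper intends: the paper's one-sentence proof likewise observes that Theorem~\ref{koksmurf} makes the finite-model property both recursively enumerable (by searching finite structures) and co-recursively enumerable (by searching for a refutation of the effectively computable finite theory $\valf{\alpha}$), hence decidable. Your additional care about the effectiveness of the map $\alpha \mapsto \valf{\alpha}$ via the algorithmic content of Theorem~\ref{basicsmurf} is a correct and worthwhile filling-in of the detail the paper leaves implicit.
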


\begin{question}\label{Q6}
Suppose $\alpha$ is a sentence in the language of {\sf LIN} expanded with finitely many
unary predicate symbols and ${\sf LIN} \dashv\alpha$.
Is there a better algorithm than the one suggested for Corollary~\ref{beslissmurf} to determine whether $\alpha$ has a finite model?
\end{question}

\noindent 
 We write:
 \begin{itemize}
 \item
  $\gamma\tupel{\delta}\theta$
iff $(\gamma \wedge \delta) \vee (\theta\wedge \neg\, \delta)$.
 \end{itemize}
 Thus, $\gamma\tupel{\delta}\theta$ means: $\gamma$ if $\delta$, else $\theta$.  

We are now ready and set to prove Theorem~\ref{nerdsmurf}(b).

\begin{proof}[Proof of Theorem~\ref{nerdsmurf}\textup (b\textup)]
Let ${\sf LIN} \dashv A$, $U := A + {\sf INF}$ and $\alpha^\star := (A+{\sf UB}(P))$.
Suppose $\alpha \vdash U$ and, suppose, to get a contradiction, that
(\ddag) $\alpha \not \jump_{\sf pf} \alpha^\star$. We note that this is equivalent to $\alpha \not \jump_{\sf pf} {\sf UB}(P)$, since
$A$ is in the $<$-language.

\medent
We show that the theory $W :=\alpha + \verz{\neg\, {\sf UB}(\phi) \mid \phi \in {\sf Form}^1_\Lambda}$ is consistent.
Here ${\sf Form}^1_\Lambda$ is the set of $\Lambda$-formulas with at most the variable $v$ free.
Suppose $W$ were inconsistent. Then, by compactness, we would have
$\alpha\vdash \bigvee_{i<k}{\sf UB}(\phi_i)$, for some $k$ and for some choice of the $\phi_i$.

Let $\psi := \phi_0 \tupel{{\sf UB}(\phi_0)}(\phi_1\tupel{{\sf UB}(\phi_1)}(\ldots))$. 
Clearly, it would follow that $\alpha \vdash {\sf UB}(\psi)$ and, hence, $\alpha \jump_{\sf pf} {\sf UB}(P)$. \emph{Quod non}, by Assumption~(\ddag).

\medent
Let $\mathcal M$ be a model of $W$. In $W$ every definable non-empty set has both a maximum and a minimum. 
We verify that $\mathcal M$ satisfies $\valf{\alpha}$. Clearly $\mathcal M$ satisfies ($\valf{\alpha}1$) and ($\valf{\alpha}2$).
Suppose $a$ is not maximal and $a$ has no direct successor.
Then, there is a minimal such element $a^\star$. Since $a^\star$ is definable, there is a least $b > a^\star$. But this $b$ must be the direct successor
of $a^\star$. A contradiction. So, every non-maximal $a$ has a direct successor.  This gives us ($\valf{\alpha}3$).
Finally, ($\valf{\alpha}4$) is again immediate.

We have shown that $\valf{\alpha}$ is consistent and, hence, by Theorem~\ref{koksmurf}, 
has a finite model. A contradiction. We may conclude that Assumption~(\ddag) is false.
\end{proof}

\bibliographystyle{alpha}
\bibliography{provint}

\appendix

\section{Proof of Nine Cases}\label{ninecases}
We formulate the generalized version of our main theorem.

\begin{theorem}\label{grotesmurf}
Consider any recursively enumerable theory $U$ that is not finitely axiomatizable and any finite expansion $\Theta$ of the signature of $U$ with ${\sf M}_\Theta \geq 2$. 
Let $\sqsubseteq$ be one of $\dashv$,  $\dashv_{\sf c}$, $\dashv_{\sf mc}$ and
let $\preceq$ be one of $\dashv$, $\jumpb_{\sf pf}$ or $\jumpb$. 

Then, for all $\alpha \sqsupseteq U$, there
is a $\beta$ with $\alpha \succ \beta \sqsupseteq U$.
\end{theorem}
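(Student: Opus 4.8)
The plan is to derive all nine cases at once from Theorem~\ref{heiligesmurf}, following the template of Corollary~\ref{goedesmurf} and using only two further ingredients: Theorem~\ref{grappasmurf} and the Fact from the preliminaries (that $U\sqsubseteq\alpha$, $U\dashv\beta$ and $\beta\preceq\alpha$ together give $U\sqsubseteq\beta$). Fix $\sqsubseteq\in\{\dashv,\dashv_{\sf c},\dashv_{\sf mc}\}$ and $\preceq\in\{\dashv,\jumpb_{\sf pf},\jumpb\}$, and suppose $U\sqsubseteq\alpha$. In each of the three readings this yields $\alpha\vdash U$, hence $U\dashv\alpha$, so Theorem~\ref{heiligesmurf} applies (here we use that $U$ is recursively enumerable and not finitely axiomatizable, and that ${\sf M}_\Theta\geq2$; all of this is already built into Theorem~\ref{heiligesmurf}, so no separate reduction for general $\Theta$ is needed). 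We obtain a sentence $\gamma$ with $U\dashv\gamma\jumpbneq\alpha$; the two consequences I would keep are $U\dashv\gamma$ and $\gamma\not\jump\alpha$. I would then set $\beta:=(\alpha\vee\gamma)$ and check three things.

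First, $\beta\preceq\alpha$: since $\alpha\vdash\alpha\vee\gamma$ we have $\beta\dashv\alpha$, and as $\dashv$ refines both $\jumpb_{\sf pf}$ and $\jumpb$ (a provable consequence is in particular a parameter-free interpretation), $\beta\preceq\alpha$ holds for all three readings. Second, $U\sqsubseteq\beta$: from $\alpha\vdash U$ and $\gamma\vdash U$ we get $\alpha\vee\gamma\vdash U$, i.e.\ $U\dashv\beta$, and then the Fact applied with $\dashv$ in the role of $\preceq$ (using $\beta\dashv\alpha$) upgrades this to $U\sqsubseteq\beta$ uniformly in $\sqsubseteq$.

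The only step that does real work, and which I expect to be the main obstacle, is the strictness $\alpha\not\preceq\beta$. It is enough to refute the weakest of the three demands, namely $\beta\jump\alpha$, because $\alpha\preceq\beta$ under any reading of $\preceq$ forces $\beta\jump\alpha$ (a provable implication, or a parameter-free interpretation, is a fortiori an interpretation). So suppose $\beta\jump\alpha$. Since $\alpha$ is finitely axiomatized, Theorem~\ref{grappasmurf} hands us a translation $\tau$ with $\beta\vdash\exists\vec w\,\alpha^\tau$. The key move is that $\gamma\vdash\beta$, so $\gamma\vdash\exists\vec w\,\alpha^\tau$, whence a second application of Theorem~\ref{grappasmurf} gives $\gamma\jump\alpha$, contradicting $\gamma\not\jump\alpha$. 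Therefore $\alpha\not\preceq\beta$, and combining with the first two points we get $\alpha\succ\beta\sqsupseteq U$, as desired. This transfer of an interpretation of $\alpha$ from $\alpha\vee\gamma$ down to $\gamma$ (via $\gamma\vdash\alpha\vee\gamma$ and Theorem~\ref{grappasmurf}) is exactly what lets a single argument cover the full, the parameter-free, and the merely-provable readings of $\preceq$ simultaneously; everything else is bookkeeping across the nine $(\sqsubseteq,\preceq)$ combinations.
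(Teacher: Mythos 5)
Your proposal is correct and follows essentially the same route as the paper: both reduce all nine cases to Theorem~\ref{heiligesmurf}, set $\beta:=(\alpha\vee\gamma)$, and observe that strictness need only be refuted for the coarsest relation, since $\alpha\preceq\beta$ under any reading yields $\beta\jump\alpha$ and hence, via $\gamma\vdash\beta$ and Theorem~\ref{grappasmurf}, the contradiction $\gamma\jump\alpha$. The only difference is presentational: the paper factors the bookkeeping through two abstract reduction lemmas (Lemmas~\ref{babysmurf} and~\ref{kleutersmurf}), whereas you carry out the same verification in a single uniform pass.
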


\noindent
We note that Theorem~\ref{grotesmurf} tells us that if $U$ is not finitely axiomatizable, then there is no $\preceq$-has no minimal element, 
if $\sqsubseteq$ is one of $\dashv$, $\dashv_{\sf c}$, $\dashv_{\sf mc}$ and if
$\preceq$ is one of $\dashv$, $\jumpb_{\sf pf}$ or $\jumpb$.  Since the $\alpha$ are closed under infima, an element is minimal iff it is a minimum.
Thus, Theorem~\ref{grotesmurf} tells us that, if $U$ is not finitely axiomatizable, then the $\alpha$ such that $U\sqsubseteq\alpha$ have
no minimum. 

The following two lemmas allow us to derive the nine cases of Theorem~\ref{grotesmurf} from Theorem~\ref{heiligesmurf}.

\begin{lemma}\label{babysmurf}
Suppose:
\begin{enumerate}[i.]
\item
For all $\alpha$, we have: if $U \sqsubseteq_0 \alpha$, then $U \dashv \alpha$.
\item
For all $\alpha$, we have: if $U \sqsubseteq_1 \alpha$, then $U \sqsubseteq_0 \alpha$.
\item
For all $\alpha,\beta$, we have: if $U \sqsubseteq_1 \alpha$ and $U \dashv \beta \preceq \alpha$, then
$U \sqsubseteq_1 \beta$. 
 \end{enumerate}
Suppose \textup(a\textup):
 for all $\alpha$ such that $U \sqsubseteq_0 \alpha$, there is a $\beta$ with $U\sqsubseteq_0 \beta \prec \alpha$.
Then, we have \textup(b\textup):
 for all $\alpha$ such that $U \sqsubseteq_1 \alpha$, there is a $\beta$ with $U\sqsubseteq_1 \beta \prec \alpha$.
\end{lemma}

\begin{proof}
Suppose we have (a) and $U \sqsubseteq_1 \alpha$. Then, by (ii), $U \sqsubseteq_0\alpha$.
Thus, by (a), there is a $\beta$ with $U\sqsubseteq_0 \beta \prec \alpha$.  By (i), we find $U\dashv \beta \prec \alpha$.
Since, $U \sqsubseteq_1 \alpha$, we may conclude, by (iii), that $U \sqsubseteq_1 \beta \prec \alpha$.
\end{proof}

\begin{lemma}\label{kleutersmurf}
Suppose:
\begin{enumerate}[i.]
\item
For all $\alpha$ and $\beta$, we have: if  $\beta \dashv\alpha$, then $\beta \preceq_1 \alpha$.
\item
For all $\alpha$ and $\beta$, we have: if  $\beta \preceq_1\alpha$, then $\beta \preceq_0 \alpha$.
\item
$\preceq_1$ is transitive.
\item
For all $\alpha$, if $U \sqsubseteq \alpha$, then $U \dashv\alpha$.
\item
For all $\alpha$ and $\beta$, if $U \sqsubseteq \alpha$ and $U \dashv \beta \dashv \alpha$, then $U \sqsubseteq \beta$.
\end{enumerate}
Suppose  \textup(a\textup):
 for all $\alpha$ such that $U \sqsubseteq \alpha$, there is a $\beta$ with $U\sqsubseteq \beta \prec_0 \alpha$.
 Then, we have \textup(b\textup):
 for all $\alpha$ such that $U \sqsubseteq \alpha$, there is a $\beta$ with $U\sqsubseteq \beta \prec_1 \alpha$.
\end{lemma}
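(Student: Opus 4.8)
The plan is to \emph{not} return the witness $\beta$ supplied by~(a), but to pass to the disjunction $\gamma := (\alpha \vee \beta)$ and show that $\gamma$ witnesses~(b). The motivation is that~(a) delivers only $\beta \preceq_0 \alpha$, and since $\preceq_0$ is weaker than $\preceq_1$ by hypothesis~(ii), this need not yield the positive half $\beta \preceq_1 \alpha$ that~(b) requires; there is no abstract way to upgrade a $\preceq_0$-relationship to a $\preceq_1$-relationship. By contrast, $\gamma$ sits $\dashv$-below $\alpha$ outright, because $\alpha \vdash (\alpha \vee \beta)$, so the positive half $\gamma \preceq_1 \alpha$ falls straight out of hypothesis~(i), while the strict (negative) half will be inherited from $\beta$ via transitivity.

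Concretely, I would first fix $\alpha$ with $U \sqsubseteq \alpha$ and invoke~(a) to obtain $\beta$ with $U \sqsubseteq \beta$ and $\beta \prec_0 \alpha$, retaining only the part $\alpha \not\preceq_0 \beta$. Put $\gamma := (\alpha \vee \beta)$. To see $U \sqsubseteq \gamma$, note that~(iv) gives $\alpha \vdash U$ and $\beta \vdash U$, whence $\gamma \vdash U$, i.e.\ $U \dashv \gamma$; and $\alpha \vdash \gamma$ gives $\gamma \dashv \alpha$. Thus $U \dashv \gamma \dashv \alpha$, and~(v) yields $U \sqsubseteq \gamma$. For the positive half, $\gamma \dashv \alpha$ together with~(i) gives $\gamma \preceq_1 \alpha$.

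The strict half is the crux. I would argue by contradiction: suppose $\alpha \preceq_1 \gamma$. Since $\beta \vdash \gamma$, we have $\gamma \dashv \beta$, so~(i) gives $\gamma \preceq_1 \beta$; transitivity~(iii) then yields $\alpha \preceq_1 \beta$, and~(ii) gives $\alpha \preceq_0 \beta$, contradicting $\alpha \not\preceq_0 \beta$. Hence $\alpha \not\preceq_1 \gamma$, so $\gamma \prec_1 \alpha$ with $U \sqsubseteq \gamma$, which is exactly~(b). The main obstacle is recognising that one must abandon $\beta$ itself and trade the unavailable $\preceq_0$-to-$\preceq_1$ upgrade of the positive half for the free implication $\dashv \Rightarrow \preceq_1$ of~(i); the sandwich $\alpha \vdash \gamma$ and $\beta \vdash \gamma$, read through~(i) and the transitivity of $\preceq_1$, is precisely what preserves both halves simultaneously.
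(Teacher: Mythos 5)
Your proposal is correct and is essentially the paper's own proof, down to the same disjunction trick: the paper likewise takes the (a)-witness (which it calls $\gamma$), forms $\alpha\vee\gamma$, gets the positive half from hypothesis~(i) via $\dashv$, the strict half from (i)--(iii) by contradiction, and membership from (iv) and (v). Only your variable names differ.
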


\begin{proof}
Suppose we have (a) and $U \sqsubseteq \alpha$. Then, by (a), there is $\gamma$ with $U\sqsubseteq \gamma \prec_0 \alpha$.
We find $\beta := (\alpha \vee \gamma) \dashv \alpha$. Hence, by (i), $\beta \preceq_1 \alpha$. 

Suppose we would have
$\alpha \preceq_1 \beta$. We have, $\beta \dashv \gamma$, and, hence, by (i), $\beta \preceq_1 \gamma$. So, by (iii), we would have
$\alpha \preceq_1 \gamma$. But then, by (ii), we would have $\alpha \preceq_0\gamma$. Quod non. We may conclude that $\beta \prec \alpha$.
 
 Finally, we have $U \sqsubseteq \alpha$ and $U \sqsubseteq \gamma$. Hence, by (iv), $U \dashv \alpha$ and $U \dashv \gamma$. It follows that
 $U \dashv \beta \dashv \alpha$. But then, by (v), we have $U \sqsubseteq \beta$.
\end{proof}

\begin{proof}{Proof of Theorem~\ref{grotesmurf}}
We note that, if we interpret $\preceq$ as any of $\dashv$, $\jumpb_{\sf pf}$ or $\jumpb$, then any of the
pairs $\dashv, \dashv_{\sf c}$ and $\dashv_{\sf c}, \dashv_{\sf mc}$ satisfies the assumptions (i)-(iii) of Lemma~\ref{babysmurf}.
By applying Lemma~\ref{babysmurf}, we find that in order to prove Theorem~\ref{grotesmurf}, it suffices to prove the cases
where $\sqsubseteq$ is $\dashv$.

We also note that, if we interpret $\sqsubseteq$ as any of $\dashv$, $\dashv_{\sf c}$ or $\dashv_{\sf mc}$, then any
of the pairs $\jumpb_{\sf pf},\dashv$ and $\jumpb, \jumpb_{\sf pf}$ satisfies the assumptions (i)-(v) of Lemma~\ref{kleutersmurf}.
By applying Lemma~\ref{kleutersmurf}, we find that in order to prove Theorem~\ref{grotesmurf}, it suffices to prove the cases
where $\preceq$ is $\jumpb$.

Thus, we may conclude that we have Theorem~\ref{grotesmurf}.
\end{proof}

\section{Some Results in the Environment of Our Problem}\label{environment}
The following result is Theorem~5.3 of \cite{viss:inteXX}.

\begin{theorem}\label{exosmurf}
Let $A$ and $U$ be theories, where $A$ is finitely axiomatized and $U$ is recursively enumerable and sequential. 
Suppose $A \rhdnneq U$. Then, there is a finitely axiomatized theory $B$ such that $A \rhdnneq B \rhdnneq U$.
Moreover, if $A$ is sequential,  $B$ is sequential too.
\end{theorem}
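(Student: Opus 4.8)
The engine of the proof is the Orey--H\'ajek--Friedman characterisation of interpretability for sequential theories via the Interpretation Existence Lemma (a form of the arithmetised completeness theorem): for a sequential theory $S$ and a recursively enumerable theory $T$, one has $S \rhd T$ iff $S$ proves, on some $S$-definable cut, the consistency $\mathsf{Con}(T_n)$ of each finitely axiomatized subtheory $T_n$ of $T$. The plan is to produce a finitely axiomatized $B$ whose cut-consistency strength lies strictly between that of $U$ and that of $A$, and to pin this strength down by a G\"odel fixed point. Since $A \rhd U$ and $U$ is sequential, $A$ carries an interpreted sequential domain $K$, and I would carry out all consistency reasoning and every application of the Interpretation Existence Lemma inside $K$; thus global sequentiality of $A$ is not needed to obtain interpretations \emph{into} $A$, and when $A$ is itself sequential the copy $K$ can be taken internally so that $B$ comes out sequential, yielding the moreover clause.

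First I would fix an increasing enumeration $U_0 \subseteq U_1 \subseteq \cdots$ with $\bigcup_n U_n = U$, a finite sequential base $W$ (for instance the coded version of $\mathsf{AS}$ used above, or $\mathsf S^1_2$), and the interpretation $K\colon U \to A$ witnessing $A \rhd U$. The characterisation then supplies the two boundary facts the intermediate degree must respect: from $A \rhd U$ we get $A \vdash \mathsf{Con}(U_n)$ on a fixed cut $J$ for all $n$ (so $A$ has at least the consistency strength of $U$), while the hypothesis $U \nrhd A$ says $U$ does \emph{not} interpret $A$, which is the source of the ``room'' between the two degrees. I would then build $B := W + \theta$, where $\theta$ is a single $\Pi^0_1$ sentence obtained by the G\"odel Fixed Point Lemma and designed to assert a self-referentially truncated form of $\mathsf{Con}(A)$: strong enough to entail $\mathsf{Con}(U_n)$ for every $n$, yet truncated below the threshold at which it would entail full $\mathsf{Con}(A)$.

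Granting such a $\theta$, the four required inequalities fall out as follows. The lower bound $B \rhd U$ is immediate from the Interpretation Existence Lemma, since $W + \theta$ proves $\mathsf{Con}(U)$ on a cut. The upper bound $A \rhd B$ follows by verifying $A \vdash \mathsf{Con}(W+\theta)$ inside $K$, which holds because the strength of $\theta$ is dominated by $\mathsf{Con}(A)$ and $A$ proves the relevant $J$-cut-consistency facts. The upward strictness $U \nrhd B$ is then essentially free: were $U \rhd B$, the characterisation would give $U \vdash \mathsf{Con}(B)$ on a cut, and since $B \rhd U$ is formalizable as $U \vdash \mathsf{Con}(B) \to \mathsf{Con}(U)$ on a cut, we would obtain $U \vdash \mathsf{Con}(U)$ on a cut, contradicting the second incompleteness theorem for the consistent sequential theory $U$. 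Thus $B \rhdnneq U$.

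The hard part, and the step that genuinely uses $A \rhdnneq U$, is the downward strictness $B \nrhd A$: one must keep the consistency strength of $B$ strictly below that of $A$ while still forcing $B \rhd U$, and it is precisely here that $A \rhd B$ alone is not enough (the two could be mutually interpretable). The self-reference is essential: the fixed point must ``stop just short'' of proving $\mathsf{Con}(A)$ on any cut, and one must check that $\theta$ together with $\mathsf{Con}(U)$ does not recover $\mathsf{Con}(A)$ over $W$. The intended mechanism is that any putative $B$-proof of $\mathsf{Con}(A)$ below the truncation threshold would, by the fixed-point equation, already manufacture a short $U$-proof of $\bot$ --- contradicting the $\mathsf{Con}(U)$ that $B$ provides --- so that $B \nvdash \mathsf{Con}(A)$ on every cut and hence $A \rhdnneq B$. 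Verifying that the fixed point can indeed be tuned to land strictly between the $U$- and $A$-thresholds, using the strictness hypothesis $U \nrhd A$ to guarantee a nonempty gap, is the main obstacle and the place where the detailed formalization must be done carefully.
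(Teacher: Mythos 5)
The paper contains no proof of Theorem~\ref{exosmurf} for you to be compared against: it is imported verbatim as Theorem~5.3 of \cite{viss:inteXX}, so your outline can only be judged on its own terms. Judged so, it is a reasonable description of the standard strategy (Interpretation Existence Lemma plus a G\"odel fixed point), but two of the four inequalities rest on arguments that do not work as written, and the central object is never constructed. First, your derivation of $U \nrhd B$ is broken for infinitely axiomatized $U$. You claim that $B \rhd U$ ``is formalizable as $U \vdash \mathsf{Con}(B) \to \mathsf{Con}(U)$ on a cut''. Formalizing an interpretation of $U$ in $B$ only yields, for each \emph{standard} $n$, a proof of $\mathsf{Con}(B) \to \mathsf{Con}(U_n)$ on a cut; the uniform statement over all axioms of $U$ is $\Pi^0_2$ and is not available in a weak base theory. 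Combining this with $U \vdash \mathsf{Con}^J(B)$ gives only $U \vdash \mathsf{Con}^{J'}(U_n)$ for each $n$ --- and by Pudl\'ak's theorem every consistent sequential theory proves exactly that anyway, so there is no conflict with the second incompleteness theorem. The conclusion you want (no consistent r.e.\ sequential $U$ interprets a finite theory proving $\mathsf{Con}(U)$) is true, but it is itself a nontrivial strengthening of G\"odel~2 that needs its own argument, not the two-line syllogism you give.

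Second, and more seriously, the sentence $\theta$ --- the entire content of the construction --- is never written down, and the two properties you require of it pull in opposite directions. For $A \rhd B$ you need $A$ (inside its inner model of $U$) to prove $\mathsf{Con}(W+\theta)$ on a cut; your justification, that ``the strength of $\theta$ is dominated by $\mathsf{Con}(A)$'', establishes nothing, since $A$ does not prove $\mathsf{Con}(A)$ on any cut. For $B \nrhd A$ --- the only place where the hypothesis $U \nrhd A$ can possibly be used --- you explicitly defer the verification as ``the main obstacle \ldots\ where the detailed formalization must be done carefully'', and the mechanism you gesture at (a $B$-proof of $\mathsf{Con}(A)$ manufacturing a $U$-proof of $\bot$) is nowhere connected to $U \nrhd A$. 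A proposal that postpones precisely the step in which the main hypothesis enters does not yet contain a proof. (A minor further point: since your $B = W+\theta$ is sequential outright, the ``moreover'' clause is automatic and the discussion of taking the inner copy $K$ ``internally'' when $A$ is sequential does no work.)
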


\begin{question}\label{Q7}
Can we extend Theorem~\ref{exosmurf} to a wider class of theories $U$?
\end{question}

\noindent The following result is Theorem~2 of \cite{viss:onq17}. The theory {\sf R}
is the Tarski-Mostowski-Robinson theory {\sf R} from \cite{tars:unde53}.

\begin{theorem}
Suppose ${\sf R} \subseteq A$, where $A$ is finitely axiomatized and consistent.
Then, there is a finitely axiomatized $B$ such that ${\sf R} \subseteq B \subseteq A$ and
$B \nrhd A$.
\end{theorem}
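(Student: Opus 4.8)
The plan is to isolate the single feature of {\sf R} that drives the result and then bridge from {\sf R} to a finitely axiomatized theory. That feature is \emph{local finite satisfiability}: every finite subset of the axioms of {\sf R} has a finite model, whereas {\sf R} itself (and hence every theory containing it) has only infinite models, since the axioms $\underline m\neq\underline n$ force infinitely many distinct elements. First I would record the clean consequence that \emph{{\sf R} does not interpret $A$}. Indeed, if ${\sf R}\,\rhd A$ via a translation $J$, then, as $A$ is a single sentence $\alpha$, we have ${\sf R}\vdash\alpha^{J}$, so by compactness some finite fragment of {\sf R} already proves $\alpha^{J}$; evaluating this in a finite model of that fragment produces a finite $J$-internal model of $\alpha$, contradicting the fact that $A\supseteq{\sf R}$ has no finite model. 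Thus ${\sf R}\,\nrhd A$.

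The next step is a reduction that frees me from having to keep the witness below $A$. Writing $\alpha$ for an axiom of $A$, I would look for a \emph{finitely axiomatized} $\tau$ with ${\sf R}\subseteq\tau$ and $\tau\nrhd A$, and then set $B:=\alpha\vee\tau$. Two of the three requirements are then automatic: $\alpha\vdash\alpha\vee\tau$ gives $B\subseteq A$, and since both disjuncts prove {\sf R} so does their disjunction, giving ${\sf R}\subseteq B$. For the third, note that $\alpha\vee\tau\vdash\alpha^{J}$ forces \emph{both} $\alpha\vdash\alpha^{J}$ and $\tau\vdash\alpha^{J}$; hence if $\tau\nrhd A$, so that $\tau\nvdash\alpha^{J}$ for every $J$, then $\alpha\vee\tau\nvdash\alpha^{J}$ for every $J$ as well, i.e.\ $B\nrhd A$. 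So everything comes down to producing one finitely axiomatized extension of {\sf R} that fails to interpret $A$.

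For that final step I would run a diagonalization against the recursively enumerable list of candidate translations $J_0,J_1,\dots$. Since $A$ is finitely axiomatized, ``$\tau\rhd A$'' is $\Sigma^0_1$ in a code for $\tau$ (by the reasoning behind Theorem~\ref{grappasmurf}, it asserts the existence of a $\tau$-proof of some $\alpha^{J_i}$), so the target $\tau\nrhd A$ is a $\Pi^0_1$ condition that a single self-referential axiom can be tuned to meet. Concretely, I would fix $\tau$ by a Rosser-style fixed point that, for each $i$, arranges a model of $\tau$ refuting $\alpha^{J_i}$ by comparing the length of a putative $\tau$-proof of $\alpha^{J_i}$ against the length of the shortest proof of $\bot$ from $\alpha$; the representability of recursively enumerable predicates in {\sf R} is what lets these comparisons be expressed while still keeping $\tau\supseteq{\sf R}$.

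The main obstacle is exactly this last step, and it is genuine: the transparent argument for ${\sf R}\,\nrhd A$ used a \emph{finite} model of a fragment of {\sf R}, but any finitely axiomatized $\tau\supseteq{\sf R}$ has \emph{no} finite models, so that argument cannot simply be transplanted. One must instead defeat each $J_i$ on an infinite $\tau$-model that nonetheless looks ``too small'' to the interpretation $J_i$---for instance on a definable cut or a pseudofinite initial segment supplied by the fixed point---and do so uniformly in $i$ with a single axiom. A secondary subtlety is the direction of the construction: adding axioms to reach {\sf R} tends to \emph{increase} interpretation strength, so the self-referential clause must be arranged to weaken, not strengthen, $\tau$ relative to the candidate interpretations, which is precisely where the Rosser comparison (rather than a plain G\"odel sentence) is needed.
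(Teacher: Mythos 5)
A preliminary remark: the paper does not prove this statement itself; it imports it as Theorem~2 of \cite{viss:onq17}, so there is no in-paper argument to measure you against. Judged on its own terms, your proposal is not a proof: it consists of a correct warm-up, a reduction that is essentially a restatement of the theorem, and an openly acknowledged gap exactly where the mathematical content lies. The warm-up (${\sf R} \nrhd A$, via local finite satisfiability of ${\sf R}$ and the fact that $A \supseteq {\sf R}$ has no finite models) is fine, and the passage from a finitely axiomatized consistent $\tau \supseteq {\sf R}$ with $\tau \nrhd A$ to $B := \alpha \vee \tau$ is also fine. But note that any $B$ as in the theorem is itself such a $\tau$ (it is finitely axiomatized, consistent, extends ${\sf R}$, and fails to interpret $A$), so your ``reduced'' problem is equivalent to the theorem; dropping the constraint $B \subseteq A$ buys nothing. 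Everything therefore rests on the final step, which you do not carry out.

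That step, as sketched, does not go through. The proposed Rosser comparison --- a $\tau$-proof of $\alpha^{J_i}$ measured against ``the shortest proof of $\bot$ from $\alpha$'' --- is degenerate: since $\alpha$ is consistent there is no proof of $\bot$ from it, so the comparison collapses to plain provability and the Rosser device does nothing. More fundamentally, you never address the central tension: $\tau$ must be a \emph{single} sentence that proves every one of the infinitely many axioms of ${\sf R}$ (so it already carries nontrivial arithmetical strength and, as you note, has no finite models), must be consistent, and must, for every translation $J$, fail to prove the sentence expressing that $J$ carries an interpretation of $A$. Producing, uniformly in $J$, models of such a $\tau$ on which $J$ fails is the entire theorem; your remark that one would need ``a definable cut or a pseudofinite initial segment supplied by the fixed point'' names the missing ingredient rather than supplying it. A workable route, in the spirit of case (B) of the proof of Theorem~\ref{heiligesmurf}, is to arrange the self-reference so that the fixed point comes out \emph{false}: find a $\Sigma^0_1$-sentence $K$ with ``$K$ is true iff $(\alpha \vee \gamma_K) \rhd A$'', where $\gamma_K$ is built so that truth of $K$ would make $\gamma_K$ interpretable in a finitely satisfiable fragment of ${\sf R}$ (contradicting $\gamma_K \rhd A$, since $A$ has no finite models), while falsity of $K$ forces $\gamma_K \vdash {\sf R}$. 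Designing such a $\gamma_K$ in the arithmetical language is the real work, and it is absent from your proposal.
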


\begin{example}
It is very well possible that a non-finitely axiomatizable theory has a 
minimal finite extension in the same language w.r.t. $\dashv$. 
An example is Peano Arithmetic that has the inconsistent theory as its only finite extension in the same language. 

If the reader objects
to having the inconsistent theory as an example, let e.g. $A$ be the conjunction of the axioms of {\sf EA} plus $\opr_{\sf PA}\bot$.
Let $U$ be the theory axiomatized by axioms $B \vee A$, where $B$ is an axiom of {\sf PA}.
Clearly $A$ is a finite consistent extension of $U$. Suppose $C$ is another such extension. We note that $C\wedge \neg \, A$ extends {\sf PA},
so $C\wedge \neg\, A \vdash \bot$ and, hence, $C \vdash A$.
\end{example}

\section{List of Questions}

\begin{enumerate}[Q1.]
\item
It there a recursively enumerable theory $U$ with an NP set of finite models (modulo isomorphism)
such that there is no $\alpha$ with $U \dashv_{\sf c} \alpha$? (This is Question~\ref{Q1}.)
\item
Can we find a recursively enumerable $U$ and an $\alpha$ in an expanded language, such that $U \dashv_{\sf c} \alpha$,
where there is no $\beta$ such that $U \dashv_{\sf mc} \beta$? (This is Question~\ref{Q2}.)
\item
Consider any theory $U$ that is not finitely axiomatizable and any finite expansion $\Theta$ of the signature of $U$ with ${\sf M}_\Theta \geq 2$. 
Suppose $\alpha$ is not interpretable (in the full sense of interpretability) in $U$. Is there an extension $\beta$ of $U$, such that $\alpha$ is not interpretable in $\beta$?
(This is Question~\ref{Q3}.)
\item
Consider any theory $U$ that is not finitely axiomatizable and any finite expansion $\Theta$ of the signature of $U$ with ${\sf M}_\Theta \geq 2$. 
We take as the length of a proof the number of symbols in the proof written in a fixed finite alphabet. 
We define $\beta \preceq_{\sf sp} \alpha$ iff there is a polynomial $P(x)$ such that, for every $A$ (of the language of $U$),
 if $A$ is provable from $\beta$ by a proof of the length $n$, then $A$ is provable from $\alpha$ by a proof of  length $\leq P(n)$.
Can there be a $ \preceq_{\sf sp}$-minimal conservative extension $\alpha^\star$ of $U$? (This is Question~\ref{Q4}.)
\item
Is there an example of a theory $U$ and an $\alpha$ in the language of $U$ extended with a non-empty finite signature of unary predicate
symbols, such that $\alpha$ is a $\dashv$-minimal conservative extension of $U$? (This is Question~\ref{Q5}.)
\item
Suppose $\alpha$ is a sentence in the language of {\sf LIN} expanded with finitely many
unary predicate symbols and ${\sf LIN} \dashv\alpha$.
Is there a better algorithm than the one suggested for Corollary~\ref{beslissmurf} to determine whether $\alpha$ has a finite model? (This is Question~\ref{Q6}.)
\item
Can we extend Theorem~\ref{exosmurf} to a wider class of theories $U$? (This is Question~\ref{Q7}.)
\end{enumerate}

\end{document}